\definecolor{darkblue}{rgb}{0,0,0.7}
\definecolor{darkgreen}{rgb}{0,0.3,0}
\definecolor{darkred}{rgb}{0.7,0,0}
\newcommand\rd{\color{darkred}}
\newtheorem{theorem}{Theorem}
\newtheorem{lemma}[theorem]{Lemma}
\newcommand\Th[1]{Theorem~\ref{t:#1}}
\newcommand\ex{\ensuremath{\mathrm{ex}}}
\title{Rainbow Tur\'an problems for paths and forests of stars}
\date{}
\author{
Daniel Johnston\thanks{Department of Mathematical Sciences, University of Montana, Missoula, Montana 59801, USA.}
\and
Cory Palmer\thanks{Department of Mathematical Sciences, University of Montana, Missoula, Montana 59801, USA. Research partially supported by University Research Grant Program, University of Montana.}
\and
Amites Sarkar\thanks{Department of Mathematics, Western Washington University, Bellingham, Washington 98225, USA.}
}
\begin{document}

\maketitle

\begin{abstract}
For a fixed graph $F$, we would like to determine the maximum number of edges in a properly edge-colored graph
on $n$ vertices which does not contain a {\it rainbow copy} of $F$, that is, a copy of $F$ all of whose edges
receive a different color. This maximum, denoted by $\ex^*(n,F)$, is the {\it rainbow Tur\'an number} of $F$,
and its systematic study was initiated by Keevash, Mubayi, Sudakov and Verstra\"ete in 2007~\cite{KMSV}. We
determine $\ex^*(n,F)$ exactly when $F$ is a forest of stars, and give bounds on $\ex^*(n,F)$ when $F$
is a path with $k$ edges, disproving a conjecture in~\cite{KMSV}.
\end{abstract}

\section{Introduction}

For a fixed graph $F$, we would like to determine the maximum number of edges in a properly edge-colored graph
on $n$ vertices which does not contain a {\it rainbow copy} of $F$, that is, a copy of $F$ all of whose edges
receive a different color. This maximum, denoted by $\ex^*(n,F)$, is the {\it rainbow Tur\'an number} of $F$,
and its systematic study was initiated by Keevash, Mubayi, Sudakov and Verstra\"ete in 2007~\cite{KMSV}.
Among other things they proved that when $F$ has chromatic number at least $3$, then
\[
\ex^*(n,F) = (1+o(1))\ex(n,F)
\]
where $\ex(n,F)$ is the (usual) Tur\'an number of $F$. They also showed that
\[
\ex^*(n,K_{s,t}) = O(n^{2-1/s})
\]
where $K_{s,t}$ is the complete bipartite graph with classes of size $s$ and $t$.
This research was continued by Das, Lee and Sudakov~\cite{DLS}, who partially answered a question from~\cite{KMSV}
on even cycles (this case has an interesting connection to additive number theory). In this paper, we determine
$\ex^*(n,F)$ exactly when $F$ is a forest of stars, and give bounds on $\ex^*(n,F)$ when $F$ is a path
with $l$ edges, disproving a conjecture in~\cite{KMSV}.

Our methods also yield short proofs of the classic results on Erd\H os and Gallai on the (usual)
Tur\'an numbers of matchings~\cite{EG}, and of some recent results of Lidick\'y, Liu and Palmer \cite{LLP} on the Tur\'an
numbers of forests of stars.

\section{Matchings}

Write $M_k$ for a matching with $k$ edges. The usual Tur\'an number for matchings was determined by Erd\H os and
Gallai~\cite{EG}, who proved the following. Define $G_{n,k}=(V,E)$ to be the graph containing a clique $G_k$ on
vertex set $V_k\subset V$, where $|V|=n,|V_k|=k$, and in which each $v\in V_k$ is joined to every vertex of
$W=V\setminus V_k$. Then
\begin{align*}
\ex(n,M_k)=\max\{e(G_{n,{k-1}}),e(K_{2k-1})\}&=\max\left\{\binom{k-1}{2}+(k-1)(n-k+1),\binom{2k-1}{2}\right\}\\
&=n(k-1)+O(k^2),
\end{align*}
and, for sufficiently large $n$, $G_{n,{k-1}}$ is the unique extremal graph. The second term of the maximum is necessary
since a clique on $2k-1$ vertices also contains no $M_k$, and for small $n$ it has more edges than $G_{n,{k-1}}$.

In other words, for sufficiently large $n$, $\ex(n,M_k)=\binom{k-1}{2}+(k-1)(n-k+1)$. Rather surprisingly, the
same is true for $\ex^*(n,M_k)$. First we establish a weak version of this result. Although both the next two
theorems are special cases of the results in the next section, their proofs will serve as templates for what follows.

\begin{theorem}\label{t:weakmatching}
\[
\ex^*(n,M_k)=n(k-1)+O(k^2).
\]
\end{theorem}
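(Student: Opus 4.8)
The plan is to sandwich $\ex^*(n,M_k)$ between two bounds that agree up to $O(k^2)$. For the lower bound, color every edge of $G_{n,k-1}$ with its own color (a proper edge-coloring); since $V_{k-1}$ is a vertex cover of size $k-1$, this graph contains no $M_k$ at all, let alone a rainbow one, so $\ex^*(n,M_k)\ge e(G_{n,k-1})=\binom{k-1}{2}+(k-1)(n-k+1)=n(k-1)+O(k^2)$.

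For the upper bound, let $G$ be a properly edge-colored graph on $n$ vertices with no rainbow $M_k$, and let $M=\{u_1v_1,\dots,u_mv_m\}$ be a \emph{maximum} rainbow matching, with $u_iv_i$ of color $c_i$; then $m\le k-1$. Put $S=V(M)$, $C=\{c_1,\dots,c_m\}$ and $U=V(G)\setminus S$, so $|S|=2m$ and $|U|=n-2m$. I would bound the edges of $G$ in three parts: those inside $S$ (at most $\binom{2m}{2}=O(k^2)$); those inside $U$, each of which must have a color in $C$ (otherwise $M$ together with that edge is a larger rainbow matching), so for each $i$ the $c_i$-colored edges inside $U$ form a matching $N_i$ by properness, giving $\sum_i|N_i|$ such edges; and those between $S$ and $U$, numbering $\sum_i(a_i+b_i)$ where $a_i=|N(u_i)\cap U|$ and $b_i=|N(v_i)\cap U|$.

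The core is the inequality $a_i+b_i+|N_i|\le|U|+2k$ for every $i$. Split $N(u_i)\cap U$ into the set $A_i$ of vertices $x$ for which $u_ix$ has a color outside $C$, and the rest, which number at most $m-1$ by properness; define $B_i$ analogously for $v_i$. If $xy\in N_i$ and some $z\in A_i\setminus\{x,y\}$, then $(M\setminus\{u_iv_i\})\cup\{xy,u_iz\}$ is a rainbow matching of size $m+1$, which is impossible; hence $N_i\ne\emptyset$ forces $|A_i|,|B_i|\le2$. If $N_i=\emptyset$ but $A_i$ and $B_i$ are both nonempty, then for $y\in B_i$ and any $x\in A_i\setminus\{y\}$ the matching $(M\setminus\{u_iv_i\})\cup\{u_ix,v_iy\}$ has size $m+1$ and is rainbow unless $u_ix$ and $v_iy$ share a color; since this must hold for every such $x$ while the colors at $u_i$ are distinct, again $|A_i|,|B_i|\le2$. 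Otherwise one of $A_i,B_i$ is empty and the corresponding $a_i$ or $b_i$ is at most $m-1$. Checking these cases, and using $|N_i|\le|U|/2$, gives the claimed bound, so summing over $i$ yields
\[
e(G)\le\binom{2m}{2}+\sum_{i=1}^m\bigl(a_i+b_i+|N_i|\bigr)\le\binom{2m}{2}+m(|U|+2k)=n(k-1)+O(k^2).
\]

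The main obstacle is the color bookkeeping behind $a_i+b_i+|N_i|\le|U|+2k$: one must rule out every way of enlarging $M$ to a rainbow matching by adjoining a fresh-colored edge — either across $e_i$ into $U$, or alongside an edge of $U$ — and it is precisely the properness of the coloring that prevents $|A_i|$ and $|B_i|$ from being large once any such edge is present. Everything else is routine accounting, and this argument is intended to serve as the template for the forest-of-stars results in the next section.
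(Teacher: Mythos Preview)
Your argument is correct, but it is genuinely different from the paper's. The paper also starts from a rainbow $M_{k-1}$ on vertex set $A$ with $B=V\setminus A$, but then isolates the set $C\subset A$ of vertices sending at least $2k$ edges to $B$, shows $|C|\le k-1$ greedily, and proves the structural fact that every edge of $G[B]$ must carry one of the $k-1-|C|$ colors appearing on the matching edges disjoint from $C$ (otherwise one can extend to a rainbow $M_k$ from the high-degree vertices). This bounds $\Delta(G[B])\le k-1-|C|$ and the edge count follows. Your approach instead does a per-edge augmenting analysis: for each matching edge $u_iv_i$ you bound $a_i+b_i+|N_i|$ by showing that a fresh-colored edge from $u_i$ or $v_i$ into $U$, together with a $c_i$-colored edge in $U$ or a fresh-colored edge on the other endpoint, would enlarge $M$.

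Both give the $O(k^2)$ error, and your augmenting arguments are clean. The trade-off is that the paper's ``high-degree set $C$'' device is exactly what gets reused: in Theorem~\ref{strongmatching} one shows $|C|=k-1$ is forced and then $G[(A\cup B)\setminus C]$ is edgeless, and in Lemma~\ref{general-stars} the set $C$ becomes the set of vertices of degree $\ge 3e(F)$ that will host the large star-centers. Your per-edge accounting does not set up this framework, so your final sentence (that this argument templates the forest-of-stars results) is not accurate for the route the paper actually takes.
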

\begin{proof}
Suppose $G=(V,E)$ has the maximum number of edges such that there exists a proper edge-coloring $\chi$ of $G$
with no rainbow $M_k$. Then $G$ must contain a rainbow $M_{k-1}$, on vertex set $A$, say. Write $B=V\setminus A$,
$C\subset A$ for those vertices of $A$ which send at least $t=2k$ edges to $B$, and set $c=|C|$.

We must have $c\le k-1$, or else we could greedily build a rainbow matching from $A$ to $B$ of size $k$ as follows.
First choose an edge $c_1b_1\in E$, where $c_1\in C$ and $b_1\in B$, where without loss of generality $\chi(c_1b_1)=1$.
Then choose an edge $c_2b_2\in E$ of a different color, say $\chi(c_2b_2)=2$, where $c_2\in C$ and $b_2\in B$
with $b_2\not=b_1$. This is possible since $d(c_2)\ge 3$. Continuing, we finally choose $c_kb_k\in E$ with
$\chi(c_kb_k)=k$, which is possible since $d(c_k)\ge 2k-1$ (we have $k-1$ vertices $b_1,\ldots,b_{k-1}$ and $k-1$
edge colors to avoid). Alternatively, the inequality $c\le k-1$ follows on observing that if any edge $c_ic_j$ of our
$M_{k-1}$ has two vertices from $C$, then $c_ic_j$ can be replaced by two edges $c_ib_i$ and $c_jb_j$ of new colors.

At least (and in fact, exactly) $k-1-c$ of the edges of our $M_{k-1}$ contain no vertex of $C$; write $M'$
for this set of edges. We claim that $G'=G[B]$ is $(k-1-c)$-colorable. Indeed, it is $(k-1-c)$-colored by $\chi$.
For if $e\in E(G')$ has a color not appearing among the colors of $M'$, we can form a rainbow copy of $M_k$ by
starting with $M'$ and $e$, and then greedily extending from the vertices of $C$ as above (at the last stage we have
$k-1$ colors and at most $(c-1)+2\le(k-2)+2=k$ vertices to avoid). Consequently, the maximum degree in $G[B]$ is at
most $k-1-c$, and so $e(G[B])\le\frac{k-1-c}{2}(n-2(k-1))$. Therefore,
\begin{align*}
e(G)&\le\binom{2(k-1)}{2}+(2(k-1)-c)(2k-1)+c(n-2(k-1))+\frac{k-1-c}{2}(n-2(k-1))\\
&=(k-1)(6k-5)-c(2k-1)+\frac{k-1+c}{2}(n-2(k-1))\\
&\le (k-1)(6k-5)+(k-1)(n-2(k-1))\\
&= n(k-1)+(k-1)(4k-3).
\end{align*}
\end{proof}

\noindent Next we refine this argument to get an exact result, at least for sufficiently large $n$.

\begin{theorem}\label{strongmatching}
For $n\ge 9k^2$,
\[
\ex^*(n,M_k)=\binom{k-1}{2}+(k-1)(n-k+1).
\]
\end{theorem}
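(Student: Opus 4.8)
The plan is to bootstrap from the weak bound $\ex^*(n,M_k) = n(k-1)+O(k^2)$ of Theorem~\ref{t:weakmatching}, sharpening every estimate in that proof under the hypothesis $n \ge 9k^2$. Let $G$ be extremal with a proper coloring $\chi$ having no rainbow $M_k$, so $G$ contains a rainbow $M_{k-1}$ on a vertex set $A$ with $|A| = 2(k-1)$; set $B = V \setminus A$. The lower bound is immediate: $G_{n,k-1}$ has a proper edge coloring with no rainbow $M_k$ — indeed no $M_k$ at all, rainbow or otherwise, since every matching meets the $(k-1)$-clique side — and $G_{n,k-1}$ is properly edge-colorable (any simple graph is, by Vizing, with $\Delta+1$ colors; more simply one can exhibit an explicit coloring). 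So the content is the matching upper bound.

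First I would sharpen the structural dichotomy. As in the weak proof, let $C \subseteq A$ be the vertices sending at least $t = 2k$ edges to $B$, with $c = |C| \le k-1$. The new input is that I want $c$ to essentially play the role of the clique size. Re-examine the greedy extension argument: if $c = k-1$, then the $k-1$ edges of the rainbow $M_{k-1}$ together with $C$ already force enough flexibility that $G[B]$ must be \emph{edgeless} (any edge in $G[B]$ with a new color completes a rainbow $M_k$ by greedily extending from all $k-1$ vertices of $C$, and one checks the color/vertex budget still works), and moreover every vertex of $A \setminus C$ has bounded degree. The crucial quantitative step is to show that each vertex not in $C$ has degree at most roughly $k-1$ into the rest of the graph, \emph{and} that vertices of $A\setminus C$ and of $B$ together induce very few edges — ideally one wants $G$ to look like: a set $S$ of at most $k-1$ "high-degree" vertices joined to everything, plus a bounded-size remainder inducing $O(k^2)$ edges. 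Concretely I would prove: there is a set $S$ with $|S| = c \le k-1$ such that $G - S$ has maximum degree at most $k-1-c$ on $B$ and every component touching $A \setminus C$ is confined to a set of $O(k)$ vertices, giving $e(G-S) \le \binom{k-1}{2} + O(k^2)$ — wait, more carefully, $e(G-S) \le \frac{k-1-c}{2}\cdot n + O(k^2)$.

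Then the counting finishes it. We get
\begin{align*}
e(G) &\le c(n-1) + \binom{c}{2}\cdot[\text{correction, at most }0] + e(G-S)\\
&\le c(n-c) + \binom{c}{2} + \frac{k-1-c}{2}n + O(k^2).
\end{align*}
Viewing the dominant terms as a function of $c$, the coefficient of $n$ is $c + \frac{k-1-c}{2} = \frac{k-1+c}{2}$, which is maximized at $c = k-1$, where it equals $k-1$; since $n \ge 9k^2$ dominates the $O(k^2)$ error by a definite margin, the maximum over $c \in \{0,1,\dots,k-1\}$ is attained \emph{only} at $c = k-1$, and there the bound reads $e(G) \le (k-1)(n-k+1) + \binom{k-1}{2}$ once the lower-order terms are pinned down exactly (here is where $G[B]$ being edgeless and $A \setminus C$ being empty, i.e.\ $|A| = 2(k-1) = \ldots$, needs the precise accounting). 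Matching the lower bound, equality holds and $G = G_{n,k-1}$.

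The main obstacle I anticipate is the exact — not just asymptotic — control of the "remainder": showing that when $c = k-1$ the graph really has \emph{no} extra edges beyond the $G_{n,k-1}$ pattern, rather than $O(k^2)$ slack. This requires pushing the greedy rainbow-matching-completion argument to its limit, carefully tracking that with $k-1$ pendant vertices of $C$ available one can always avoid the at most $k-1$ forbidden colors and at most $k-1$ forbidden vertices simultaneously, so that \emph{any} edge not incident to $S$ creates a rainbow $M_k$. The threshold $n \ge 9k^2$ should be exactly what is needed to make the $c = k-1$ term strictly beat the $c = k-2$ term after absorbing the $\binom{2(k-1)}{2}$-type terms from the induced subgraph on $A$.
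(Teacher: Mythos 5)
Your proposal follows essentially the same route as the paper: take the set $C$ of vertices sending at least $2k$ edges to $B$, show $c\le k-1$, rule out $c<k-1$ by the weak-theorem edge count (which is where $n\ge 9k^2$ enters), and for $c=k-1$ use the greedy rainbow-completion from the $k-1$ vertices of $C$ to show that every edge of $G$ must be incident to $C$, yielding exactly $\binom{k-1}{2}+(k-1)(n-k+1)$. The step you flag as the "main obstacle" is precisely the paper's final argument, and your color/vertex budget (at most $k-1$ forbidden colors and at most $k$ forbidden vertices against $2k$ distinctly colored edges into $B$) does close it, so the plan is sound.
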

\begin{proof}
We already know that $\ex^*(n,M_k)\ge\ex(n,M_k)=\binom{k-1}{2}+(k-1)(n-k+1)$, so we only need to show that
$\ex^*(n,M_k)\le\binom{k-1}{2}+(k-1)(n-k+1)$. To this end, suppose again that $G=(V,E)$ has the maximum number of
edges such that there exists a proper edge-coloring $\chi$ of $G$ with no rainbow $M_k$. Following the proof of
\Th{weakmatching}, we see that we must have $c=k-1$, since otherwise
\[
e(G)\le\frac{2k-3}{2}(n-2(k-1))+(k-1)(6k-5)<\binom{k-1}{2}+(k-1)(n-k+1),
\]
as long as $n\ge 9k^2$. Armed with this information, we deduce that $G[(A\cup B)\setminus C]$
contains no edges. Otherwise, if $e\in E(G[(A\cup B)\setminus C])$, we could greedily extend $e$ to a rainbow
matching $M_k$ using the vertices of $C$. Consequently,
\[
e(G)\le\binom{|C|}{2}+|C|(|A|-|C|+|B|)=\binom{k-1}{2}+(k-1)(n-k+1).
\]
\end{proof}

\noindent We remark that this method can be used to prove Erd\H os and Gallai's result that
$\ex(n,M_k)=\binom{k-1}{2}+(k-1)(n-k+1)$, at least for sufficiently large $n$. Rather than elaborate here, we note that
the theorem is a special case of the result of Lidick\'y, Liu and Palmer on star forests, which we will reprove
in the next section. Note also that our argument avoids Hall's theorem.

\section{Forests of stars}

In this section we address the rainbow Tur\'an number of a forest $F$ where each component is a star.
In this case, the Tur\'an number was determined by Lidick\'y, Liu and Palmer~\cite{LLP}. We give a new proof of this
result at the end of this section.

Let $F$ be a forest of $k$ stars $S_1,S_2,\dots, S_k$ such that $e(S_j) \leq e(S_{j+1})$ for each $j$. We will construct
a family of $n$-vertex graphs that each have a proper edge-coloring with no rainbow copy of $F$. For $0\leq c \leq k-1$,
define $f(c)$ to be
\[
f(c)=\left(\sum_{i=1}^{k-c} e(S_i)\right)-1.
\]
The graph $H_F(n,c)$ is defined as follows. For $c=k-1$, we connect a set $C$ of $c=k-1$ universal vertices to an edge-maximal
graph $H$ of maximum degree $f(c)=f(k-1)= e(S_1)-1$ on the remaining $n-k+1$ vertices. (A universal vertex is one that is
joined to every other vertex, so that in particular $G[C]$ is a clique.)
When $c\leq k-2$, we connect a set $C$ of $c$ universal vertices to an edge-maximal $f(c)$-edge-colorable graph $H$ on $n-c$
vertices.

Note the slight distinction in the definition of the subgraph $H$ in the two cases $c=k-1$ and $c\leq k-2$. In both cases,
it is easy to see that $H$ can only contain $k-c-1$ of the stars in $F$. The remaining $c+1$ stars must each use at least one
vertex from $C$, which is impossible. Therefore, in both cases, $H_F(n,c)$ does not contain a rainbow copy of $F$.

When $c=k-1$, the subgraph $H$ is $(e(S_1)-1)$-regular when either $n-c$ or $e(S_1)-1$ is even. Otherwise, $H$ has
one vertex of degree $e(S_1)-2$ and $n-k$ vertices of degree $e(S_1)-1$. Therefore, the total number of edges in $H_F(n,k-1)$
is
\begin{align*}
e(H_F(n,k-1)) & = \binom{k-1}{2}+(k-1)(n-k+1)+ \left\lfloor \frac{(e(S_1)-1)(n-k+1)}{2} \right\rfloor.
\end{align*}
When $c \leq k-2$, there are exactly $\lfloor \frac{n-c}{2} \rfloor$ edges of each color in $H$, so that $H$ has
$f(c)\lfloor \frac{n-c}{2} \rfloor$ edges. Therefore, the total number of edges in $H_F(n,c)$ is
\begin{align*}
e(H_F(n,c)) & = \binom{c}{2}+c(n-c)+ f(c)\left\lfloor \frac{n-c}{2} \right\rfloor \\
& = \binom{c}{2}+c(n-c) + \left(\left(\sum_{i=1}^{k-c} e(S_i)\right)-1\right)\left\lfloor \frac{n-c}{2} \right\rfloor.
\end{align*}
Consequently, for all $c\leq k-1$, the number of edges in the graph $H_F(n,c)$ is
\begin{equation}\label{edge-bound}
e(H_F(n,c)) = cn + \frac{1}{2}\left(\left(\sum_{i=1}^{k-c} e(S_i)\right)-1\right)n+O(1).
\end{equation}
Furthermore, the subgraph $H$ of $H_F(n,c)$ has average degree $f(c)-\epsilon$, where $\epsilon < 1$.

Of particular interest is the construction $H_F(n,0)$, which is simply an edge-maximal $(e(F)-1)$-edge-colored graph,
since $f(0)=e(F)-1$.

The key to our analysis is the following technical lemma, which allows us to restrict our attention to the family $H_F(n,c)$.

\begin{lemma}\label{general-stars}
Let $F$ be a forest of $k$ stars. Suppose that $G$ is an edge-maximal properly edge-colored graph on $n$ vertices containing
no rainbow copy of $F$. Then, for sufficiently large $n$, $G$ is isomorphic to one of the graphs $H_F(n,c)$.
\end{lemma}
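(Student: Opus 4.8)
The plan is to follow the template established by the proofs of \Th{weakmatching} and \Th{strongmatching}, adapting the greedy-extension machinery from matchings to arbitrary forests of stars. Let $G$ be an edge-maximal properly edge-colored graph on $n$ vertices with no rainbow $F$. The first step is to extract a ``core'' analogous to the rainbow $M_{k-1}$: since $G$ is edge-maximal and $n$ is large, $G$ must contain a rainbow copy of the forest $F' = S_1 \cup \dots \cup S_{k-1}$ obtained by deleting the largest star, because otherwise we could add edges. (One should be slightly careful here: we want a rainbow $F'$ that is in some sense ``saturated'' — e.g., one minimizing the number of vertices used, or chosen so that each star's center has been pushed to maximal degree — so that the later extension arguments have room to work.) Let $A$ be the vertex set of this rainbow $F'$, let $B = V \setminus A$, and let $C \subseteq A$ be the set of vertices sending at least $t$ edges into $B$ for a suitable threshold $t = \Theta(k \cdot \max_i e(S_i))$, with $c = |C|$.

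The second step is the key structural dichotomy. Just as $c \le k-1$ in the matching proof, here one shows $c \le k-1$: if $c \ge k$, then using the $c$ high-degree vertices of $C$ as star-centers (or as endpoints of the many ``loose'' stars — stars that are single edges if $e(S_1)=1$, or more generally centers whose leaves go into $B$) one can greedily assemble a rainbow $F$, picking leaves in $B$ one at a time, avoiding the $O(k \cdot \max_i e(S_i))$ previously-used vertices and colors; the threshold $t$ is chosen exactly so this greedy process never gets stuck. Conversely, when $c \le k-1$, the $c$ vertices of $C$ together with the stars of $F'$ not touching $C$ already realize $k-c-1$ of the stars of $F$ outside of a small region, so the subgraph $G[B]$ (or $G[(A \cup B) \setminus C]$) can contain no large structure: specifically, one argues that $G$ restricted to $V \setminus C$ has the property that its edge-coloring uses few colors — precisely $f(c) = \big(\sum_{i=1}^{k-c} e(S_i)\big) - 1$ colors — since any edge of a ``new'' color, together with the $k-c-1$ stars of $F'$ avoiding $C$ and a greedy extension using $C$, would complete a rainbow $F$. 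This forces $G[V \setminus C]$ to be $f(c)$-edge-colorable (hence of bounded max degree in the $c = k-1$ case, where $f(k-1) = e(S_1)-1$), and every vertex of $C$ is universal or nearly so.

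The third step is to upgrade ``nearly universal'' and ``bounded structure'' to the exact isomorphism with $H_F(n,c)$. Edge-maximality does the work: once we know $C$ can be taken to consist of universal vertices (any non-edge from $v \in C$ can be added without creating a rainbow $F$, using a fresh color, because $F$ is still blocked by the vertices of $C \setminus \{v\}$ — here one must check the coloring can be extended properly, which is routine since $v$ has only finitely many forbidden colors among $n-1$ neighbors' worth of edges), and that $G[V \setminus C]$ is an edge-maximal $f(c)$-edge-colorable graph (when $c \le k-2$) or an edge-maximal graph of maximum degree $e(S_1)-1$ with the induced proper coloring (when $c = k-1$), we conclude $G \cong H_F(n,c)$ for that value of $c$. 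The slightly awkward $c=k-1$ case — where the relevant bound is on maximum degree rather than number of colors — arises because $f(k-1)=e(S_1)-1$ and a graph of max degree $d$ is not in general $d$-edge-colorable, so the construction there allows the full clique structure among the universal vertices plus a max-degree-$(e(S_1)-1)$ graph below; the proof must branch to reflect this.

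The main obstacle I anticipate is making the greedy extension arguments fully rigorous when the stars have varying sizes, rather than all being single edges as in the matching case. When $e(S_1) \ge 2$, ``extending'' means building whole stars, and one must track not just forbidden vertices but ensure that at each center one can find enough leaves of mutually distinct, previously-unused colors; the count of forbidden colors and vertices grows like $\sum_i e(S_i)$, which dictates the threshold $t$ and the ``sufficiently large $n$'' hypothesis. A secondary subtlety is the choice of the initial rainbow $F'$: to run the argument that forces all of $C$ to be universal and $G[V\setminus C]$ to have the right coloring, one should fix $F'$ to be \emph{extremal} in a suitable sense (e.g. maximizing the number of edges of $G$ covered, or lexicographically maximizing the degrees of the chosen star-centers in $G$), so that one cannot ``reroute'' the rainbow $F'$ to absorb a troublesome extra edge; getting this optimization set up cleanly, and verifying it interacts correctly with both branches $c \le k-2$ and $c = k-1$, is where the bookkeeping is heaviest.
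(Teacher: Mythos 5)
There is a genuine gap at the heart of your argument: the step where you bound the coloring of $G[V\setminus C]$ is a direct transplant of the matching argument, and it fails once the stars have size at least $2$. In the proof of \Th{weakmatching}, an edge of a ``new'' color in $G[B]$ can itself serve as the missing component of $M_k$, because the components of a matching are single edges. For a general star forest this is no longer true: a single edge of a previously unseen color is not a copy of $S_{k-c}$, and the $k-c-1$ stars of your fixed rainbow $F'$ that happen to avoid $C$ need not be copies of the right stars $S_1,\dots,S_{k-c-1}$ nor use the right number of colors. Worse, if your argument were valid it would bound the number of colors on $G[V\setminus C]$ by the number of colors appearing on those $k-c-1$ stars, i.e.\ by roughly $\sum_{i=1}^{k-c-1}e(S_i)$, which is strictly smaller than $f(c)=\bigl(\sum_{i=1}^{k-c}e(S_i)\bigr)-1$ whenever $e(S_{k-c})\ge 2$; the extremal graph $H_F(n,c)$ itself violates such a bound, so no correct argument can yield it. (Relatedly, your claim that $G[V\setminus C]$ is $f(c)$-edge-colorable ``hence of bounded max degree'' in the case $c=k-1$ has the implication backwards: for $c=k-1$ the correct conclusion is only a maximum-degree bound $e(S_1)-1$, and graphs of maximum degree $e(S_1)-1$ need not be $(e(S_1)-1)$-edge-colorable --- this is exactly why the construction $H_F(n,k-1)$ is defined differently from the case $c\le k-2$.)

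What your sketch is missing is the quantitative use of edge-maximality that drives the paper's proof. The paper defines $C$ directly as the vertices of degree at least $3e(F)$ (no auxiliary rainbow $F'$ is needed), and then compares $e(G)\ge e(H_F(n,c))$ to deduce, via Lemma~\ref{degree-lemma}, that $G'=G[V\setminus C]$ has $\Omega(n)$ (in fact $n-O(1)$) vertices of degree exactly $f(c)$. This supply of spread-out degree-$f(c)$ vertices is what makes every subsequent embedding possible: a vertex of degree $>f(c)$ is ruled out by placing $k-c-1$ centers at pairwise distance at least $3$ and counting colors, and, for $c\le k-2$, an $(f(c)+1)$-st color is ruled out by a separate argument that picks a smallest color class (``red''), finds many degree-$f(c)$ vertices avoiding it, and embeds $S_1$ on a red edge using the inequality $e(S_1)\le\lceil f(c)/2\rceil$ (which is where the hypothesis $c\le k-2$ enters) together with a parity analysis of $n-c$. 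Your proposal never establishes that $G'$ has many high-degree vertices, and replaces the delicate color-count step with the one-extra-edge trick, so both the maximum-degree bound $f(c)$ and the exact-color claim remain unproved. The outer skeleton (a set $C$ of at most $k-1$ high-degree vertices, structure on the rest, then edge-maximality forcing $G\cong H_F(n,c)$) does match the paper, but the core counting and embedding arguments would need to be supplied essentially from scratch.
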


Before turning to the proof of this lemma, we explain its use in the proof of our main result, Theorem~\ref{main-star-theorem}.
Specifically, suppose we have proved Lemma~\ref{general-stars}, and consider a fixed forest of stars $F$. In order to find the
extremal graphs for a rainbow copy of $F$, we just need to determine the value of $c=c(F)$ that maximizes the number of edges
$e(H_F(n,c))$ of $H_F(n,c)$.

For example, when $F$ is a forest of stars each of size $1$ (i.e., a matching), then, for large $n$, the sum in (\ref{edge-bound})
is maximized when $c=k-1$. Therefore, for large $n$, an edge-maximal properly edge-colored graph $G$ containing no rainbow copy of
$F$ must be isomorphic to $H_F(n,k-1)$. In this case, $f(k-1)=e(S_1)-1=0$ (this holds whenever $F$ contains a star of size 1), so that
$G$ consists of a universal set of size $k-1$ joined to an independent set of size $n-k+1$. This reproves Theorem~\ref{strongmatching}.

It turns out that, for every $F$, the maximum of $e(H_F(n,c))$ is attained at either $c=0$ or $c=k-1$.

\begin{theorem}\label{main-star-theorem}
Let $F$ be a forest of $k$ stars. Suppose that $G$ is an edge-maximal properly edge-colored graph on $n$ vertices containing
no rainbow copy of $F$. Then, for sufficiently large $n$,
1) if $F$ contains no star of size $1$, then $G$ is isomorphic to $H_F(n,0)$;

\noindent 2) otherwise, $G$ is isomorphic to the larger of $H_F(n,0)$ and $H_F(n,k-1)$.
\end{theorem}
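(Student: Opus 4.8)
The plan is to deduce Theorem~\ref{main-star-theorem} from Lemma~\ref{general-stars} by a short arithmetic comparison inside the family $H_F(n,c)$. By the lemma, for $n$ large every edge-maximal properly edge-colored $n$-vertex graph with no rainbow copy of $F$ is isomorphic to some $H_F(n,c)$ with $0\le c\le k-1$, so it remains only to decide which value of $c$ gives the most edges and to check that this value is $0$ or $k-1$. I would first record the edge count in the uniform form
\[
e(H_F(n,c))=\binom{c}{2}+c(n-c)+f(c)\left\lfloor\frac{n-c}{2}\right\rfloor,\qquad f(c)=\Bigl(\sum_{i=1}^{k-c}e(S_i)\Bigr)-1,
\]
valid for every $0\le c\le k-1$ once one notes that the $c=k-1$ construction has $f(k-1)=e(S_1)-1$.

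The crux is the difference between consecutive members of the family. Passing from $c$ to $c+1$ deletes the star $S_{k-c}$ from the bounded-degree part $H$, so that $f(c+1)=f(c)-e(S_{k-c})$, while adding one universal vertex; a direct calculation then gives
\[
e\bigl(H_F(n,c+1)\bigr)-e\bigl(H_F(n,c)\bigr)=\Bigl(1-\tfrac12 e(S_{k-c})\Bigr)n+O(1).
\]
Hence, for $n$ large, this difference is negative and linear in $n$ when $e(S_{k-c})\ge 3$, it is $O(1)$ with sign controlled by the floor terms when $e(S_{k-c})=2$, and it is positive and linear in $n$ when $e(S_{k-c})=1$. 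Since $e(S_{k-c})$ is nonincreasing in $c$, this says $c\mapsto e(H_F(n,c))$ is, up to the $O(1)$ correction, a convex function on $\{0,\dots,k-1\}$, and so is maximized at $c=0$ or at $c=k-1$.

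I would then extract the two cases of the statement. Because $e(S_1)\le\cdots\le e(S_k)$, increasing $c$ from $0$ deletes progressively smaller stars, so $e(H_F(n,c))$ is weakly decreasing while $e(S_{k-c})\ge 2$ and then strictly increasing, by a linear-in-$n$ amount per step, once $e(S_{k-c})=1$. If $F$ has no star of size $1$ the count is weakly decreasing throughout, so $c=0$ is optimal; if moreover some star of $F$ has at least three edges then the first step ($c=0\to 1$) already loses a linear-in-$n$ number of edges, so $H_F(n,0)$ is the unique maximizer, and the residual case in which every star has exactly two edges I would handle directly from the exact formula, where the floor terms show $e(H_F(n,0))$ is at least as large as every $e(H_F(n,c))$. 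This proves 1). If $F$ does contain a star of size $1$, the maximum is still at $c=0$ or $c=k-1$, and I would compare those endpoints head-on:
\[
e\bigl(H_F(n,0)\bigr)-e\bigl(H_F(n,k-1)\bigr)=\tfrac12\Bigl(\ \sum_{i=2}^{k}e(S_i)-2(k-1)\ \Bigr)n+O(1),
\]
so for $n$ large the larger construction is $H_F(n,0)$ when $\sum_{i\ge 2}e(S_i)>2(k-1)$ and $H_F(n,k-1)$ when the inequality reverses; for $F=M_k$ this recovers $H_F(n,k-1)$, in agreement with Theorem~\ref{strongmatching}. This proves 2).

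The substantive work here is Lemma~\ref{general-stars}, which is taken as given; granting it, the one genuinely delicate point is the $O(1)$ bookkeeping in the borderline situations --- when $e(S_{k-c})=2$, or when the two endpoint constructions agree to leading order --- where the comparison must be carried out exactly rather than asymptotically, and where one should note that for certain parities of $n$ the extremal number of edges can be attained by more than one (non-isomorphic) member of the family.
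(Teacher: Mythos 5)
Your proposal is correct and follows essentially the same route as the paper: it invokes Lemma~\ref{general-stars} and then maximizes $e(H_F(n,c))$ over $c$, with your consecutive-difference computation $e(H_F(n,c+1))-e(H_F(n,c))=(1-\tfrac12 e(S_{k-c}))n+O(1)$ being just a repackaging of the paper's three-regime analysis (stars of size $\ge 3$, $=2$, $=1$, cf.\ its Table~1), and your endpoint comparison matching the paper's conclusion in case 2. Your closing caveat about exact $O(1)$ bookkeeping in the borderline all-size-$2$ situation (and possible ties for certain parities of $n$) is apt, since both your sketch and the paper's ``simple computations'' defer exactly that check.
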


\begin{proof}
First consider the case when $F$ contains no star of size $1$. In this case, if $F$ contains at least one star of size at least $3$,
then, for sufficiently large $n$, the right hand side of (\ref{edge-bound}) is maximized when $c=0$. Therefore, by Lemma~\ref{general-stars},
$G$ must be isomorphic to $H_F(n,0)$ (for large $n$).
		
If every star in $F$ has size $2$, then the sum of the two main terms in (\ref{edge-bound}) is constant over all $c\leq k-1$,
so we need to examine the error term. In both the cases $c=k-1$ and $c\leq k-2$, we have
\[
e(H_F(n,c))=\binom{c}{2}+c(n-c) +\left(2(k-c)-1\right)\left\lfloor \frac{n-c}{2} \right \rfloor.
\]
Simple computations show that this is maximized at $c=0$. Therefore, $G$ must be isomorphic to $H_F(n,0)$.

To summarize, if $F$ contains no star of size 1, $G$ must be isomorphic to $H_F(n,0)$, if $n$ is sufficiently large. As already
mentioned, this extremal graph is just an edge-maximal graph that is properly edge-colored with $f(0)=e(F)-1$ colors.
	
Now suppose that $F$ contains a star of size $1$. Write $s\geq 1$ for the number of stars of size $1$, $t$ for the number of stars
of size $2$, and $p=k-s-t$ for the number of stars of size at least $3$ in $F$. If $p=0$, then we should clearly take $c=k-1$ to
maximize the sum of the two main terms in (\ref{edge-bound}). Consequently, we may assume $p>0$. We now have three estimates for
the number of edges in $H_F(n,c)$, depending on the value of $c$.
If $c<p$ (and $p>0$), then
\[
e(H_F(n,c)) = cn + \frac{1}{2}\left(s+2t+\left(\sum_{i=s+t+1}^{k-c} e(S_i)\right)-1\right)n+O(1),
\]
which is maximized (for large $n$) when $c=0$ (as each $e(S_i)$ in the above sum is at least $3$).
Thus, when $c<p$ (and $p>0$), we should take $c=0$, and then
\begin{equation}\label{small-c}
e(H_F(n,c)) = \frac{1}{2}\left(s+2t+\left(\sum_{i=s+t+1}^{k}e(S_i)\right)-1\right)n+O(1).
\end{equation}
If next $p \leq c < p+t$, then
\begin{equation}\label{med-c}
e(H_F(n,c)) = cn + \frac{1}{2}(s+2(t-(c-p))-1)n+O(1) = \frac{1}{2}(s+2t+2p-1)n+O(1),
\end{equation}
which (for large $n$) is clearly smaller than (\ref{small-c}) if $p>0$.
If lastly $p+t\leq c\leq p+t+s-1 = k-1$, then
\[
e(H_F(n,c)) = cn + \frac{1}{2}(s-(c-(p+t))-1)n+O(1) = \frac{1}{2}(s+t+p+c-1)n+O(1),
\]
which is maximized (for large $n$) when $c=k-1$. (We remind the reader that in the case we are considering, $f(k-1)=e(S_1)-1=0$,
so that both constructions of $H_F(n,c)$ coincide when $c=k-1$.) Thus, when $p+t\leq c \leq p+t+s-1=k-1$, we should take $c=k-1=s+t+p-1$,
and then
\begin{equation*}
e(H_F(n,c)) = (s+t+p-1)n+O(1) = (k-1)n+O(1),
\end{equation*}
which is larger than (\ref{med-c}) when $n$ is large. Therefore, for sufficiently large $n$, the number of edges in $H_F(n,c)$
is maximized when $c$ is either $0$ or $k-1$.
\end{proof}

The choice of $c$ to maximize the sum of the two main terms in (\ref{edge-bound}) can be illustrated as follows (see Table 1).
Write down a row of $k$ 2s, and underneath this row, write down the star sizes $e(S_k),e(S_{k-1}),\ldots,e(S_1)$ in decreasing
order. Next, take the sum of the first $c$ entries in the top row and the last $k-c$ entries in the bottom row, where $c\le k-1$.
This sum represents twice the coefficient of $n$ in (\ref{edge-bound}).

\begin{table}
\[\begin{array}{|cccc|c||ccccc||ccccc|}\hline
&&p&&&&&t&&&&&s&&\\\hline
{\bf\rd 2}&{\bf\rd 2}&{\bf\rd 2}&{\bf\rd 2}&2&2&2&2&2&2&2&2&2&2&2\\
5&4&4&3&{\bf\rd 3}&{\bf\rd 2}&{\bf\rd 2}&{\bf\rd 2}&{\bf\rd 2}&{\bf\rd 2}&{\bf\rd 1}&{\bf\rd 1}&{\bf\rd 1}&{\bf\rd 1}&{\bf\rd 1}\\\hline
\end{array}\]
\caption{Illustration of the proof of Theorem~\ref{main-star-theorem}}
\end{table}

We now turn our attention to the proof of Lemma~\ref{general-stars}. We begin with a simple lemma.

\begin{lemma}\label{degree-lemma}
Fix positive integers $d$ and $\Delta$ and a constant $0\leq\epsilon<1$. If $G$ is a graph with average degree at least $d-\epsilon$
and maximum degree at most $\Delta$, then the number of vertices in $G$ of degree less than $d$ is at most
\[
\frac{\Delta-d+\epsilon}{\Delta-d+1} n.
\]
In particular, the number of vertices in $G$ of degree at least $d$ is $\Omega(n)$ (i.e. at least $Cn$ where  $C=C(d,\Delta,\epsilon)>0$).
\end{lemma}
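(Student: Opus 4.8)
The plan is to bound the number of low-degree vertices by a simple averaging (double-counting) argument on degrees, splitting the vertex set into those of degree less than $d$ and those of degree at least $d$. First I would let $n_1$ denote the number of vertices of degree less than $d$ and $n_2 = n - n_1$ the number of vertices of degree at least $d$. Each vertex of the first type contributes at most $d-1$ to the degree sum, and each vertex of the second type contributes at most $\Delta$; on the other hand the degree sum is at least $(d-\epsilon)n$ by the average-degree hypothesis. This gives the inequality
\[
(d-\epsilon)n \le (d-1)n_1 + \Delta n_2 = (d-1)n_1 + \Delta(n-n_1).
\]

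Next I would simply rearrange this to isolate $n_1$. Bringing the $n_1$ terms to one side yields $(\Delta - d + 1)n_1 \le (\Delta - d + \epsilon)n$, and since $\Delta \ge d$ (otherwise there are no vertices of degree $\ge d$ and the average-degree condition with $\epsilon<1$ would force a contradiction, so we may assume $\Delta \ge d$ and the coefficient $\Delta - d + 1 \ge 1$ is positive) we may divide to obtain
\[
n_1 \le \frac{\Delta - d + \epsilon}{\Delta - d + 1}\, n,
\]
which is exactly the claimed bound.

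For the "in particular" clause, I would observe that the coefficient $\frac{\Delta - d + \epsilon}{\Delta - d + 1}$ is strictly less than $1$ because $\epsilon < 1$ (the numerator is strictly smaller than the denominator). Hence the number of vertices of degree at least $d$ is
\[
n - n_1 \ge \left(1 - \frac{\Delta - d + \epsilon}{\Delta - d + 1}\right) n = \frac{1-\epsilon}{\Delta - d + 1}\, n,
\]
so we may take $C = C(d,\Delta,\epsilon) = \frac{1-\epsilon}{\Delta-d+1} > 0$, establishing the $\Omega(n)$ bound.

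The argument is entirely routine; there is no real obstacle. The only point requiring a word of care is the degenerate case $\Delta < d$, where the statement is vacuous or the hypotheses are contradictory, so one should note at the outset that we may assume $\Delta \ge d$ and hence the denominator $\Delta - d + 1$ is a positive integer. Everything else is a one-line manipulation of the degree-sum inequality.
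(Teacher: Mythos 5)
Your proof is correct and is essentially the paper's own argument: bound the degree sum below by $(d-\epsilon)n$ and above by $(d-1)n_1+\Delta(n-n_1)$, then solve for $n_1$; your explicit constant $C=\frac{1-\epsilon}{\Delta-d+1}$ and the remark about the degenerate case $\Delta<d$ are harmless additions. Nothing further is needed.
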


\begin{proof}
The sum of the degrees in $G$ is at least $(d-\epsilon) n$.
On the other hand, if $x$ is the number of vertices of degree less than $d$ in $G$, then the sum of the degrees in $G$ is at most
\[
(d-1)x + \Delta(n-x).
\]
Combining these two estimates and solving for $x$ gives the result.
\end{proof}

We are now ready to prove Lemma~\ref{general-stars}.

\begin{proof}[Proof of Lemma~\ref{general-stars}]
Let $G$ be as in the statement of the theorem, and let $C$ be the set of vertices in $G$ of degree at least $3e(F)$.
Write $c=|C|$. Observe that $c\leq k-1$, since otherwise we could greedily embed the components of $F$ into $G$,
using the vertices of $C$ as their centers.
	
The subgraph $G'=G[V\setminus C]$ has maximum degree at most $3e(F)$. Since $G$ has at least as many edges as the graph
$H_F(n,c)$, it follows that $G'$ must have average degree at least $f(c)-\epsilon$, for some $\epsilon < 1$. Therefore,
by Lemma~\ref{degree-lemma}, the subgraph $G'$ has at least $\Omega(n)$ vertices of degree
\[
f(c)= \left(\sum_{i=1}^{k-c} e(S_i)\right)-1.
\]
	
Now suppose (for a contradiction) that $G'$ has a vertex $v$ of degree greater than $f(c)$. Then we can form a rainbow
copy of $F$ in $G$ as follows. Choose $k-c-1$ vertices of $G'$ of degree $f(c)$ that are at distance at least $3$ from
each other and from $v$ (this is possible since the maximum degree is constant). We can build a rainbow forest of the
stars $S_1,S_2,\dots, S_{k-c-1}$ on these vertices, since these stars use $f(c)+1-e(S_{k-c})$ edge colors. The vertex
$v$ has degree at least $f(c)+1$, so it is incident to at least $f(c)+1-(f(c)+1-e(S_{k-c})) = e(S_{k-c})$ unused colors.
Therefore, we can extend the rainbow forest to include $S_{k-c}$. Finally, the remaining $c$ stars of $F$ can be greedily
embedded using the vertices in $C$ as their centers, so that $G$ contains a rainbow copy of $F$. This is a contradiction.
Therefore, $G'$ has maximum degree at most $f(c)$. When $c=k-1$ we are done, since we have shown that $G$ has at most
as many edges as $H_F(n,k-1)$.
	
Let us now consider the case $c\leq k-2$. The lower bound $e(G)\ge e(H_F(n,c))$ shows that the number of edges in $G'$
is at least
\[
f(c)\left \lfloor \frac{n-c}{2} \right \rfloor\ge f(c)\left(\frac{n-c}{2} \right)-\left\lfloor\frac{f(c)}{2}\right\rfloor.
\]
In particular, $G'$ has $n-O(1)$ vertices of degree $f(c)$, since $G'$ has maximum degree $f(c)$. We claim that $G'$ must
be colored with $f(c)$ edge colors. Suppose, for a contradiction, that $G'$ is colored with at least $f(c)+1$ colors.
Then there is a color class, say {\em red}, with at most
\[
\frac{1}{f(c)+1}\left \lfloor \frac{n-c}{2} \right \rfloor
\]
edges. Therefore, there are $\Omega(n)$ vertices in $G'$ of degree $f(c)$ that are not incident to a red edge.
	
Since $c \leq k-2$, the sum in $f(c)$ has at least two terms, so that
\[
2e(S_1) \leq e(S_1) + e(S_2) \leq \sum_{i=1}^{k-c} e(S_i)=f(c)+1.
	\]
As $e(S_1)$ is an integer, this implies that $e(S_1) \leq \lceil f(c)/2 \rceil$.
	
We now embed $S_1$ in $G'$ using a red edge. If $n-c$ is even, then every vertex in $G'$ has degree
$f(c) \geq \lceil f(c)/2 \rceil$, so we can choose a vertex $v$ incident to a red edge and embed $S_1$ using that red edge.
	
When $n-c$ is odd, $G'$ may contain vertices of degree less than $f(c)$. Consider a red edge $uv$ and observe that at least
one of the vertices $u$ and $v$ (say $v$) has degree at least $\lceil f(c)/2 \rceil$; otherwise the number of edges in $G'$
is less than $f(c)\left \lfloor \tfrac{n-c}{2} \right \rfloor$. Therefore, we can embed $S_1$ using the red edge $uv$ with
$v$ as the center.

Now, among the vertices not incident to red edges, pick $k-c-1$ vertices of degree $f(c)$ that are at distance at least $3$
from each other and from the center $v$ of $S_1$. Using these vertices as centers, we can greedily build a rainbow forest of
stars $S_2,S_3,\dots, S_{k-c}$, since we have only used at most $e(S_1)-1$ of the $f(c)$ colors incident to these vertices.
Finally, the remaining $c$ stars of $F$ can be greedily embedded using the vertices in $C$ as their centers, so that $G$
contains a rainbow copy of $F$. This is a contradiction. Therefore, $G'$ is properly $f(c)$-edge-colored.
\end{proof}

We now give a new proof of the result of Lidick\'y, Liu and Palmer on the Tur\'an number of forests of stars.

We begin by describing the extremal graph for the forest of stars $S_1,S_2,\dots, S_k$, where $e(S_j) \leq e(S_{j+1})$ for each $j$.
Let $H'_F(n,i)$ be the graph obtained by connecting a set of $i$ universal vertices to an edge-maximal graph of maximal degree
$e(S_{k-i})-1$ on $n-i$ vertices. Observe that if one of $e(S_{k-i})-1$ or $n-i$ is even, and $n$ is large enough, then $H$ is
$(e(S_{k-i})-1)$-regular. If both are odd, then $H$ has exactly one vertex of degree $e(S_{k-i})-2$, and $n-i-1$ vertices of degree
$e(S_{k-i})-1$. Each of the graphs $H'_F(n,i)$ is $F$-free, since otherwise each of the $i+1$ stars $S_k,S_{k-1},\dots, S_{k-i}$ must use at
least one vertex from the universal set of size $i$, which is impossible.

\begin{theorem}[Lidick\'y, Liu, Palmer \cite{LLP}]
Let $F$ be a forest of $k$ stars $S_1,S_2,\dots, S_k$, such that $e(S_{j}) \leq e(S_{j+1})$ for each $j$. Then
\[
\ex(n,F) = \max_{0 \leq i \leq k-1} \left \{i(n-i) + \binom{i}{2} + \left\lfloor \frac{(e(S_{k-i})-1)(n-i)}{2}\right\rfloor \right \}.
\]
\end{theorem}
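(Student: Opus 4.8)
The plan is to adapt the proof of Lemma~\ref{general-stars} to the (easier) colour‑free setting. The lower bound $\ex(n,F)\ge\max_{0\le i\le k-1}e(H'_F(n,i))$ is immediate, since each $H'_F(n,i)$ is $F$‑free, so only the matching upper bound needs work, and I would prove it for all sufficiently large $n$. Fix an extremal $F$‑free graph $G$ on $n$ vertices with $e(G)=\ex(n,F)$ and, exactly as in Lemma~\ref{general-stars}, let $C$ be the set of vertices of degree at least $3e(F)$, put $c=|C|$, and set $G'=G-C$. As there, $c\le k-1$: if $c\ge k$ we could greedily embed the $k$ stars of $F$ with centres at $k$ distinct vertices of $C$ (each of degree $\ge 3e(F)\ge e(F)+k$). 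Also $\Delta(G')<3e(F)$, since removing $C$ only decreases degrees.

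The crux is to show that $\Delta(G')\le e(S_{k-c})-1$. Suppose instead that some $v\in V(G')$ has $\deg_{G'}(v)\ge e(S_{k-c})$; I would build a copy of $F$ in $G$, a contradiction. If $c=k-1$ this is easy: embed $S_1$ at $v$ using $e(S_1)=e(S_{k-c})$ of its $G'$‑neighbours, then embed $S_2,\dots,S_k$ greedily at the $k-1$ vertices of $C$. If $c\le k-2$ I first need many moderately‑large‑degree vertices of $G'$, and here I would use extremality in the sharper form $e(G)\ge e(H'_F(n,c+1))$ — not merely $e(G)\ge e(H'_F(n,c))$, which does not suffice when $e(S_{k-c-1})=e(S_{k-c})$. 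Discarding the at most $\binom{c}{2}+c(n-c)$ edges of $G$ meeting $C$ leaves $e(G')\ge\frac12\bigl(e(S_{k-c-1})+1\bigr)(n-c)-O(1)$, so $G'$ has average degree at least $e(S_{k-c-1})+1-\epsilon$ for some $\epsilon<1$; since $\Delta(G')$ is bounded by a constant, Lemma~\ref{degree-lemma} produces $\Omega(n)$ vertices of $G'$ of degree at least $e(S_{k-c-1})$. I would then choose $w_1,\dots,w_{k-c-1}$ among these, pairwise at distance at least $3$ in $G'$ and at distance at least $3$ from $v$ (possible because $\Omega(n)$ beats the $O(1)$‑sized balls), embed $S_j$ at $w_j$ for $j\le k-c-1$ and $S_{k-c}$ at $v$ using only $G'$‑neighbours — these $k-c$ stars are vertex‑disjoint because the relevant closed $G'$‑neighbourhoods are disjoint — and finally embed $S_{k-c+1},\dots,S_k$ greedily at the $c$ vertices of $C$, avoiding the $O(1)$ vertices used so far. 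This copy of $F$ contradicts the choice of $G$, so $\Delta(G')\le e(S_{k-c})-1$.

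Given the claim, a graph on $n-c$ vertices with maximum degree at most $e(S_{k-c})-1$ has at most $\lfloor(e(S_{k-c})-1)(n-c)/2\rfloor$ edges, so
\[
e(G)\le\binom{c}{2}+c(n-c)+\left\lfloor\frac{(e(S_{k-c})-1)(n-c)}{2}\right\rfloor=e(H'_F(n,c))\le\max_{0\le i\le k-1}e(H'_F(n,i)),
\]
and together with the lower bound this gives the claimed equality for all sufficiently large $n$.

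The main obstacle I anticipate is obtaining the \emph{exact} bound $\Delta(G')\le e(S_{k-c})-1$ in the second paragraph: a bound weaker by an additive constant would drop out immediately from $e(G)\ge e(H'_F(n,c))$, but since the theorem is an equality we are forced to the sharper input $e(G)\ge e(H'_F(n,c+1))$ and to check that the greedy embedding of the smaller stars $S_1,\dots,S_{k-c-1}$ really goes through even when several of the values $e(S_i)$ coincide. Everything else is routine bookkeeping, essentially identical to parts of the proof of Lemma~\ref{general-stars}.
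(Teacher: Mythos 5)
Your proposal is correct and is essentially the paper's own argument: the same high-degree set $C$ (the threshold $3e(F)$ versus $e(F)$ is immaterial), the same bound $c\le k-1$, the same use of Lemma~\ref{degree-lemma} and of distance-$3$ vertices as star centers with a final greedy embedding from $C$, and the same conclusion $\Delta(G')\le e(S_{k-c})-1$ yielding $e(G)\le e(H'_F(n,c))$. The only (harmless, and slightly streamlining) deviation is that for $c\le k-2$ you always compare with $H'_F(n,c+1)$, using the $\Omega(n)$ vertices of degree at least $e(S_{k-c-1})$ only for $S_1,\dots,S_{k-c-1}$ and the hypothetical vertex of degree at least $e(S_{k-c})$ for $S_{k-c}$, whereas the paper splits into the sub-cases $e(S_{k-c-1})=e(S_{k-c})$ (ruled out by comparing with $H'_F(n,c+1)$) and $e(S_{k-c-1})<e(S_{k-c})$ (handled by comparing with $H'_F(n,c)$); your version merges these two sub-cases into one.
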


\begin{proof}
Note that $G$ has at least as many edges as $H'_F(n,i)$ for all $i \leq k-1$.
Suppose that $G$ has a set $C$ of $c$ vertices of degree at least $e(F)$. We must have $c \leq k-1$, since otherwise we could greedily
embed $F$ from the vertices of $C$. Let $G'=G[V\setminus C]$ be the graph on the remaining $n-c$ vertices.
The maximum degree of $G'$ is less than $e(F)$. First let us suppose that $c=k-1$. In this case, we claim that the maximum degree of $G'$
is at most $e(S_1)-1$. Indeed, if there is a vertex $v$ of higher degree, then we can embed $S_1$ into $G'$ using $v$, and complete
the forest $F$ by greedily embedding the stars $S_2,S_3,\dots S_k$ using the vertices of $C$ as their centers.
	
Next suppose that $c<k-1$. Suppose (for a contradiction) that $e(S_{k-c-1})= e(S_{k-c})$. Comparing $G$ to $H'_F(n,c+1)$, we see that $G'$ must
have average degree at least $e(S_{k-c-1})-\epsilon=e(S_{k-c})-\epsilon $. Therefore, by Lemma~\ref{degree-lemma}, the graph $G'$ contains
$\Omega(n)$ vertices of degree at least $e(S_{k-c})$. Now we can embed $F$ as follows. Choose $k-c$ vertices of $G'$ of degree $e(S_{k-c})$ that
are at distance at least $3$ from each other. We can embed the stars $S_1,S_2,\dots, S_{k-c}$ on these vertices. Next we can greedily embed
the remaining stars $S_{k-c+1}, \dots, S_k$ into $G$ using the vertices of $C$ as their centers; a contradiction.
	
Therefore, we may assume that $e(S_{k-c-1})<e(S_{k-c})$. By comparing $G$ to $H'_F(n,c)$, we see that $G'$ must have average degree at least
$e(S_{k-c})-1$. Therefore, by Lemma~\ref{degree-lemma}, the graph $G'$ contains $\Omega(n)$ vertices of degree at least $e(S_{k-c})-1$. Now suppose
that $G'$ has a vertex $v$ of degree greater than $e(S_{k-c})-1$. Then we can embed $F$ as follows. Choose $k-c-1$ vertices of $G'$ of degree
$e(S_{k-c})-1$ that are at distance at least $3$ from each other and from $v$. We can embed the stars $S_1,S_2,\dots, S_{k-c-1}$ on these
vertices, since $e(S_{k-c})-1 \geq e(S_{k-c-1})$. Next we embed the star $S_{k-c}$ at $v$, and then greedily embed the remaining stars
$S_{k-c+1}, \dots, S_k$ into $G$ using the vertices of $C$ as their centers; a contradiction. Therefore, the maximum degree of $G'$ is
$e(S_{k-c})-1$.
\end{proof}

\section{Paths}

In this paper, $P_l$ will denote a path with $l$ {\it edges}, which we will call a path of length $l$.
The usual Tur\'an number for paths was determined asymptotically by Erd\H os and Gallai~\cite{EG}, and exactly
by Faudree and Schelp~\cite{FS}. Erd\H os and Gallai proved that, given a path length $l$, if $l$ divides $n$
then
\[
\ex(n,P_l)=\frac{n}{l}\binom{l}{2}=\frac{l-1}{2}n,
\]
and the unique extremal graph is the disjoint union of $\frac{n}{l}$ copies of $K_l$. We briefly recall the proof.
First we show that any graph $G$ with minimum degree at least $\delta$ contains a path of length
$2\delta$ (provided of course that $2\delta<n$). Next, consider a graph $G$ of order $n$ with more than $\frac{l-1}{2}n$
edges (i.e., of average degree greater than $l-1$). By repeatedly removing a vertex of minimum degree, we can show that
$G$ must contain a subgraph $H$ whose minimum degree is at least $\frac{l}{2}$, and so $H$ contains a path of length $l$.

Following this approach for the rainbow Tur\'an problem therefore requires us to find a {\it rainbow} path of length
$c\delta$ in a graph of minimum degree $\delta$. To this end, we have the following theorem, which generalizes a result
of Gy\'arf\'as and Mhalla~\cite{GM}, and is itself a special case of a theorem of Babu, Chandran and Rajendraprasad~\cite{BCR}.
For completeness, we provide a short proof of the result we need, which is less technical than the proof in~\cite{BCR}.

\begin{theorem}\label{t:2/3}
Let $G$ be a graph with minimum degree $\delta=\delta(G)$. Then any proper edge-coloring of $G$ contains a rainbow
path of length at least $\frac23\delta$.
\end{theorem}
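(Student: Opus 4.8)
The plan is to take a longest rainbow path $P = v_0 v_1 \cdots v_m$ in $G$ (with respect to the fixed proper edge-coloring $\chi$) and argue that if $m < \frac23\delta$ then we can extend or reroute $P$ to produce a longer rainbow path, contradicting maximality. Let $R = \chi(E(P))$ be the set of $m$ colors used on $P$. The first observation is that every edge from $v_0$ to a vertex outside $P$ must use a color in $R$ (else we prepend it), and likewise at $v_m$; so all but at most $m$ of the $\delta$ edges at $v_0$ go to $\{v_1,\dots,v_m\}$ with colors already on $P$, and similarly at $v_m$. This already forces many chords at both ends of $P$.

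The core of the argument is a rotation (rotation–extension) argument adapted to the rainbow setting. Suppose $v_0 v_i$ is a chord. The usual rotation would replace $P$ by $v_{i-1} v_{i-2} \cdots v_1 v_0 v_i v_{i+1} \cdots v_m$, giving a new path of the same length with a new endpoint $v_{i-1}$; the delicate point is that for this to remain \emph{rainbow} we need $\chi(v_0 v_i) \notin \{\chi(v_0v_1),\dots,\chi(v_{i-1}v_i)\}$ — i.e. the chord's color must avoid the colors on the ``flipped'' initial segment. Since $\chi$ is proper, $\chi(v_0v_i)\neq\chi(v_{i-1}v_i)$ and $\chi(v_0v_i)\neq\chi(v_0v_1)$ automatically, so the only forbidden colors for a good rotation at $v_0v_i$ are those on $v_1v_2,\dots,v_{i-2}v_{i-1}$, a set of size $i-2$. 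The strategy is: among all chords at $v_0$, either one of them has a color that is ``fresh'' for its rotation (then rotate, obtaining a new endpoint whose neighborhood we again analyze), or every chord's color is constrained — and here we use that there are at least $\delta - m$ chords at $v_0$, pigeonholing on the $m$ colors of $R$ and on the positions $i$, to show that the indices $i$ with $v_0v_i$ a chord are concentrated in a window of length roughly $2m/3$ near one end, which is incompatible with there being $\delta-m > m/2$ of them when $m<\frac23\delta$. Symmetrically at $v_m$, and combining the two windows gives the contradiction.

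More precisely, I would set up a bipartite-type counting: let $I = \{\,i : v_0v_i \in E(G)\,\}$, so $|I| \ge \delta - m + 1$ (counting $i=1$). For each $i\in I$ with $3 \le i$, if $\chi(v_0v_i)$ is not among the $i-2$ colors $\chi(v_1v_2),\dots,\chi(v_{i-2}v_{i-1})$ then rotation at $v_i$ is valid and produces a longest rainbow path ending at $v_{i-1}$; iterating, the set of achievable endpoints from $v_0$ grows, and a standard double-rotation (rotate from $v_0$, then from the new endpoint) lets one attach an edge leaving $P$ unless the achievable endpoint set is ``closed'' — at which point a Pósa-type estimate bounds $\delta$. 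The arithmetic to watch is exactly where the constant $\frac23$ comes from: the window at $v_0$ has length about $m/?$ and the window at $v_m$ similarly, and forcing them to accommodate $\delta-m$ vertices each while overlapping appropriately yields $\delta \le \frac32 m$, i.e. $m \ge \frac23\delta$. I would also need to handle small $m$ (where $i-2$ is $0$ or $1$ and rotations are essentially free) as easy base cases, and the case $2\delta \ge n$ separately as in the uncolored Erd\H os--Gallai argument.

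The main obstacle I anticipate is controlling the interaction between rotation and the rainbow constraint across \emph{iterated} rotations: a single rotation is easy to keep rainbow, but after several rotations the set of ``used'' colors has been permuted and the bookkeeping of which colors block which future rotations becomes intricate. I expect the clean way around this is to fix the longest rainbow path $P$ and only ever perform rotations that stay at the \emph{same} length — so the color multiset $R$ is invariant — and to argue purely about which endpoints are reachable and which edges leaving $P$ could be appended, rather than tracking a growing tree of rotated paths in detail. Getting the constant to be exactly $\frac23$ rather than something weaker (like $\frac12$) will require being careful that the forbidden-color set for rotation at $v_i$ has size only about $i$ (not $2i$), exploiting properness at both $v_0$ and $v_{i}$.
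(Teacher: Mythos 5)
Your central rotation step contains a genuine error, and it is exactly the point where the rainbow constraint bites. When you rotate $P=v_0v_1\cdots v_m$ along a chord $v_0v_i$ to get $v_{i-1}v_{i-2}\cdots v_1v_0v_iv_{i+1}\cdots v_m$, the new path keeps \emph{all} edges of $P$ except $v_{i-1}v_i$ — in particular it keeps the tail edges $v_iv_{i+1},\dots,v_{m-1}v_m$, not just the flipped initial segment. So the forbidden colors for $\chi(v_0v_i)$ are all of $\chi(E(P))\setminus\{\chi(v_{i-1}v_i)\}$, a set of size $m-1$, not the set of size $i-2$ you claim; and since properness forces $\chi(v_0v_i)\neq\chi(v_{i-1}v_i)$, a rainbow-preserving rotation actually requires the chord to carry a color not appearing on $P$ at all. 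This destroys the pigeonhole/window counting you build on top of it, and the place where the constant $\frac23$ is supposed to emerge is left unresolved in your own write-up (``the window at $v_0$ has length about $m/?$''), as is the closure/Pósa-type estimate you invoke for iterated rotations. As it stands the argument does not yield $\frac23\delta$, or indeed any bound.

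The paper's proof avoids iterated rotations entirely. Fixing a longest rainbow path $v_0\cdots v_l$ with the $i$th edge colored $i$, it classifies the edges at the endpoint $v_0$ into three kinds: $s_o$ (edges to vertices off $P$, which necessarily reuse colors $1,\dots,l$), $s_i$ (edges into $P$ with colors $1,\dots,l$), and $s^{\times}$ (edges into $P$ with new colors), and analogously $t_o,t_i,t^{\times}$ at $v_l$. Properness gives $s_o+s_i\le l$, counting vertices of $P$ gives $s_i+s^{\times}\le l$, and the one rerouting used is the key inequality $s_o+t^{\times}\le l$: a new-colored chord $v_iv_l$ at $v_l$ forbids $v_0$ from sending an outside edge of color $i+1$, since $wv_0v_1\cdots v_iv_lv_{l-1}\cdots v_{i+1}$ would be a longer rainbow path. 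Adding these with $2\delta\le(s_o+s_i+s^{\times})+(t_o+t_i+t^{\times})$ gives $2\delta\le 3l$. If you want to salvage a rotation-based approach, note that the only chords usable for rainbow rotations are the new-colored ones (the $s^{\times}$-type edges), which is precisely the quantity the paper bounds rather than iterates on.
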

\begin{proof}
Suppose that $c$ is a proper edge-coloring of $G$.
Take a longest rainbow path $P=v_0v_1\cdots v_l$ in $G$, of length $l$. Without loss of generality, $c(v_{i-1}v_i)=i$
for each $i$ (i.e., the $i^{\rm th}$ edge of $P$ receives color $i$). Write $s_o$ for the number of edges colored with
colors $1,\ldots,l$ that $v_0$ sends to vertices outside $P$, and note that $v_0$ can send no other edges outside $P$,
or else $P$ could be extended. Also write $s_i$ for the number of edges of colors $1,\ldots,l$ that $v_0$ sends to other
vertices of $P$ (including $v_1$), and write $s^{\times}$ for the number of edges of other colors that $v_0$ sends to
vertices of $P$. Finally, define $t_o,t_i$ and $t^{\times}$ to be the analogous quantities for $v_l$.

Observe now that
\[
s_o+s_i\le l,\eqno(1)
\]
since $c$ is a proper coloring, that
\[
s_i+s^{\times}\le l,\eqno(2)
\]
since there are exactly $l$ vertices on $P$ other than $v_0$, and that
\[
s_o+t^{\times}\le l,\eqno(3)
\]
since if $v_iv_l\in E(G)$ with $c(v_iv_l)>l$ then there is no $w\not\in V(P)$ with $c(wv_0)=c(v_iv_{i+1})=i+1$, or else
$wv_0v_1\cdots v_iv_lv_{l-1}\cdots v_{i+1}$ would be a rainbow path in $G$ of length $l+1$. Analogous inequalities hold
for $t_o,t_i$ and $t^{\times}$.

Consequently, combining (1), (2) and (3) with the minimum degree condition, we have
\[
2\delta\le (s_o+s_i+s^{\times})+(t_o+t_i+t^{\times})=(s_i+s^{\times})+(s_o+t^{\times})+(t_o+t_i)\le l+l+l=3l,
\]
so that $l\ge\frac23\delta$, as desired.
\end{proof}

We remark that the constant $\frac23$ cannot be improved in general. To see this, let $G$ be the disjoint union of $r$ copies of $K_4$,
and properly 3-color the edges of each $K_4$ (there is a unique way to do this, up to isomorphism). Then $\delta(G)=3$,
and the longest rainbow path in $G$ has length 2. However, Chen and Li~\cite{CL}, and independently Mousset~\cite{M},
proved that a proper edge-coloring of $K_n$ contains a rainbow path of length $\frac34n-o(n)$. It is widely believed
(see~\cite{A})
that a proper edge-coloring of $K_n$ in fact contains both a rainbow path and a rainbow cycle of length $n-o(n)$, and perhaps
even a rainbow path of length $n-2$. However, Maamoun and Meyniel~\cite{MM} showed that we are not always guaranteed a
rainbow path of length $n-1$. In their construction, $n=2^k$, and we identify the vertices of $K_{2^k}$ with the points
of the Boolean cube $\{0,1\}^k$. If we now color each edge ${\bf uv}$ with color ${\bf u-v}\not={\bf 0}$, a monochromatic
path ${\bf v_0v_1\cdots v_{n-1}}$ of length $n-1$ in $K_n$ would involve all possible colors (except for ${\bf 0}$), so that
\[
{\bf v_0-v_{n-1}}=\sum_{i=0}^{n-2}({\bf v_i-v_{i+1}})=\sum_{{\bf 0\not=x}\in\{0,1\}^k}{\bf x}=\sum_{{\bf x}\in\{0,1\}^k}{\bf x}={\bf 0},
\]
which implies that $v_0=v_{n-1}$, a contradiction.

A slight modification of the proof of \Th{2/3} yields a short proof of the full result of Babu, Chandran and Rajendraprasad~\cite{BCR}
mentioned above. Their result deals with general (not necessarily proper) edge-colorings, in which, given an edge-colored graph $G$,
$\theta(G)$ is the minimum number of distinct colors seen at each vertex. Clearly $\theta(G)=\delta(G)$ if the coloring is proper.

\begin{theorem}\label{t:2/3gen}
Let $G$ be an edge-colored graph in which every vertex is incident to at least $\theta=\theta(G)$ edge-colors. Then $G$ contains a rainbow
path of length at least $\frac23\theta$.
\end{theorem}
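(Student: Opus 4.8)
The plan is to mimic the proof of Theorem~\ref{t:2/3} almost verbatim, replacing the minimum-degree hypothesis by the weaker ``$\theta$ colors at every vertex'' hypothesis and checking that every inequality used survives this substitution. Take a longest rainbow path $P=v_0v_1\cdots v_l$, and normalize so that $c(v_{i-1}v_i)=i$. The key observation is that the three structural inequalities (1), (2), (3) and their analogues for $v_l$ did not really use that the coloring was proper in a way that caused color repetitions to matter; they are all of the form ``the number of edges from $v_0$ to [some set of $\le l$ targets] is at most $l$.'' Specifically, inequality (2), $s_i+s^\times\le l$, is purely a counting statement about the $l$ vertices of $P$ other than $v_0$ and holds for any coloring; inequality (3), $s_o+t^\times\le l$, follows from the path-extension argument, which again does not use properness.

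The one place properness entered was inequality (1), $s_o+s_i\le l$: we argued that the $s_o+s_i$ edges leaving $v_0$ with colors in $\{1,\dots,l\}$ are all distinctly colored because $c$ is proper, hence there are at most $l$ of them. Under the general hypothesis this is false as stated, but it is not needed. Instead I would reorganize the count so that the quantity playing the role of the ``low-color'' edges at $v_0$ is counted using the color hypothesis rather than by vertex-counting. Concretely: let $\theta$ colors appear at $v_0$; at most $l$ of these are among $\{1,\dots,l\}$ and the edges realizing the remaining $\ge \theta-l$ colors must all go into $V(P)$ (an edge to a vertex outside $P$ with a new color would extend $P$), and they go to distinct vertices of $P$ since a single vertex $v_i$ is joined to $v_0$ by at most one edge. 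Summing the analogous statement at $v_l$ and combining with inequalities (2) and (3)-type bounds should again yield $2\theta\le 3l+O(1)$, giving $l\ge\frac23\theta$ up to lower-order terms; a careful accounting of the overlap (the edge $v_0v_l$, colors shared between the two endpoints) should let one absorb or avoid any additive loss, matching the clean $\frac23\theta$ bound.

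I expect the main obstacle to be exactly this bookkeeping: in the proper case the partition of the edges at $v_0$ into the three types $s_o, s_i, s^\times$ is a genuine partition by color-and-destination, and the telescoping identity $(s_i+s^\times)+(s_o+t^\times)+(t_o+t_i)$ used disjointness of color classes at a vertex. With general colorings, ``number of colors at $v_0$'' and ``number of edges at $v_0$'' diverge, so I must be careful to phrase the lower bound $2\theta\le\cdots$ in terms of colors and the three upper bounds in terms of appropriate quantities so that they still chain together. The cleanest route is probably to define, for each endpoint, the set of colors it sees that do not lie in $\{1,\dots,l\}$, show each such color is witnessed by an edge into $V(P)$, bound the number of such colors at $v_0$ by $s^\times_{\text{col}}\le l$ and the ``cross'' interaction between the two endpoints' witness edges by the extension argument as in (3); this should reproduce the three-term sum with $\theta$ in place of $\delta$ and finish identically.
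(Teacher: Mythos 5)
Your final plan---replacing the edge counts $s_o,s_i,s^\times$ (and their $t$-analogues) by counts of \emph{colors} seen at the two endpoints, noting that every color not on $P$ must be witnessed by an edge into $V(P)$, and rerunning the same three inequalities together with the path-extension argument---is exactly the paper's proof, and with these color-based definitions the bound $2\theta\le 3l$ comes out cleanly, with no additive loss to absorb. So the proposal is correct and takes essentially the same approach as the paper.
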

\begin{proof}
We follow the proof of \Th{2/3}, with a slight change in the definitions of $s_o,s_i$ and $s^{\times}$. This time, $s_o$ is the number
of {\it colors} of edges that $v_0$ sends to vertices outside $P$ (as before, each of these colors already occurs on $P$), and $s^{\times}$ is
the number of colors not seen on $P$ which occur as the colors of edges $v_0$ sends to $P$. Now $s_i$ is the number of colors
from 1 to $l$ that occur as colors of edges $v_0$ sends to $P$ and {\it which are not counted in} $s_o$. The rest of the proof goes
through as before, with $\delta$ replaced by $\theta$.
\end{proof}

Returning to the problem at hand, we can use \Th{2/3} to obtain a bound on the rainbow Tur\'an number of paths.

\begin{theorem}\label{t:paths}
For each fixed $l\ge 1$, we have
\[
\frac{l-1}{2}n\sim\ex(n,P_l)\le\ex^*(n,P_l)\le\left\lceil\frac{3l-2}{2}\right\rceil n.
\]
\end{theorem}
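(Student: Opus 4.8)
The lower bounds are immediate: $\ex(n,P_l) \le \ex^*(n,P_l)$ since any $P_l$-free graph is trivially rainbow-$P_l$-free, and the asymptotics $\ex(n,P_l) \sim \frac{l-1}{2}n$ is the classical Erd\H os--Gallai result recalled above (take a disjoint union of cliques $K_l$). So the content is the upper bound $\ex^*(n,P_l) \le \lceil \frac{3l-2}{2}\rceil n$. The plan is to mimic the classical Erd\H os--Gallai argument for paths, but using \Th{2/3} in place of the elementary fact that minimum degree $\delta$ forces a path of length $2\delta$. Concretely: suppose $G$ on $n$ vertices is properly edge-colored with no rainbow $P_l$ and has more than $\lceil\frac{3l-2}{2}\rceil n$ edges; I will derive a contradiction by exhibiting a rainbow $P_l$.

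First I would reduce to a subgraph of large minimum degree. If $e(G) > \lceil\frac{3l-2}{2}\rceil n \ge \frac{3l-2}{2}n$, then $G$ has average degree greater than $3l-2$. Repeatedly delete a vertex of current minimum degree: each deletion removes strictly fewer than $\frac{3l-2}{2}+\tfrac12$ edges (more carefully, fewer than half the current average degree, so the average degree does not drop below the threshold), so the process cannot exhaust all vertices, and it terminates at a nonempty subgraph $H$ with $\delta(H) \ge \frac{3l-2}{2} + \tfrac12 = \frac{3l-1}{2}$, hence $\delta(H) \ge \lceil \frac{3l-1}{2}\rceil$; in any case $\delta(H) > \frac{3l-2}{2}$, so $\delta(H) \ge \frac{3l-1}{2}$ and thus $\delta(H) \ge \frac{3(l-?)}{?}$ — the point I want is $\delta(H) \ge \frac{3l-1}{2}$, which gives $\frac23\delta(H) \ge l - \tfrac13 > l-1$. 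The inherited edge-coloring of $H$ is still proper. Now apply \Th{2/3} to $H$ with its proper edge-coloring: $H$ contains a rainbow path of length at least $\frac23\delta(H) > l-1$, i.e.\ of length at least $l$. Truncating this path to its first $l$ edges yields a rainbow $P_l$ in $H \subseteq G$, contradicting the assumption.

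The only delicate point is the exact bookkeeping in the deletion step — getting the constant $\lceil\frac{3l-2}{2}\rceil$ to line up so that the surviving subgraph $H$ has $\delta(H)$ large enough that $\frac23\delta(H) \ge l$. One must check both parities of $l$: the ceiling $\lceil\frac{3l-2}{2}\rceil$ forces $e(G) \ge \lceil\frac{3l-2}{2}\rceil n + 1$, average degree $> 2\lceil\frac{3l-2}{2}\rceil - \tfrac{2}{n} $; peeling off minimum-degree vertices as long as they have degree $\le \lceil\frac{3l-2}{2}\rceil$ removes at most $\lceil\frac{3l-2}{2}\rceil$ edges per step and at most $n$ steps, which is fewer than $e(G)$, so the process stops at $H$ with $\delta(H) \ge \lceil\frac{3l-2}{2}\rceil + 1 \ge \frac{3l-2}{2} + 1 = \frac{3l}{2}$, whence $\frac23\delta(H) \ge l$. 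Then \Th{2/3} finishes it. This step is routine but is where the precise value of the constant is pinned down; everything else is a direct transcription of the classical proof with \Th{2/3} substituted in.
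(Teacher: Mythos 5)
Your proof is correct and takes essentially the same route as the paper: pass to a subgraph $H$ with $\delta(H)\ge\left\lceil\frac{3l-2}{2}\right\rceil+1\ge\frac{3l}{2}$ by the standard minimum-degree peeling argument, then apply \Th{2/3} to get a rainbow path of length at least $\frac23\delta(H)\ge l$. The only cosmetic difference is that the paper splits into the cases $l$ even and $l$ odd, while your single ceiling computation handles both parities at once; the garbled arithmetic in your middle paragraph is superseded by the correct bookkeeping in your final paragraph.
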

\begin{proof}
We will make use of the standard fact that a graph $G$ of average degree more than $2d$ contains a subgraph $H$ of minimum degree at least $d+1$.
This is proved by repeatedly removing a vertex of minimum degree from $G$.

First, suppose that $l$ is even, and write $l=2k$. Let $G$ be a graph of order $n$ with more than $\frac{3l-2}{2}n=(3k-1)n$ edges (and so of
average degree more than $2(3k-1)$). Then $G$ contains a subgraph $H$ of minimum degree at least $3k$, which by \Th{2/3} contains a rainbow
path of length $2k=l$.

Second, suppose that $l$ is odd, and write $l=2k+1$. Let $G$ be a graph of order $n$ with more than $\frac{3l-1}{2}=(3k+1)n$ edges (and so
of average degree more than $2(3k+1)$). Then $G$ contains a subgraph $H$ of minimum degree at least $3k+2$, which by \Th{2/3} contains a rainbow
path of length $2k+1=l$.
\end{proof}

For small values of $l$, one can do considerably better. It is trivial that $\ex^*(n,P_1)=\ex(n,P_1)=0$ and that
$\ex^*(n,P_2)=\ex(n,P_2)=\left\lfloor\frac{n}{2}\right\rfloor$. When $l=3$, we have the following simple result.

\begin{theorem}
Suppose that $n$ is divisible by 4. Then $\ex^*(n,P_3)=\frac{3n}{2}=\frac32\ex(n,P_3)+O(1)$.
\end{theorem}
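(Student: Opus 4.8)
For $n$ divisible by $4$, $\ex^*(n,P_3)=\frac{3n}{2}$.

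The plan is to prove the lower bound by an explicit construction and the matching upper bound by a structural argument carried out component by component. For the lower bound, I would take $G$ to be the disjoint union of $n/4$ copies of $K_4$ (using $4\mid n$), each given a proper $3$-edge-coloring. The three color classes of any proper $3$-edge-coloring of $K_4$ are exactly its three perfect matchings, so any copy of $P_3$ inside one of these $K_4$'s is a Hamiltonian path $abcd$ whose end-edges $ab$ and $cd$ form a perfect matching and hence receive the same color. Thus no $K_4$, and so $G$ itself, contains a rainbow $P_3$, while $e(G)=6\cdot(n/4)=\tfrac{3n}{2}$. Hence $\ex^*(n,P_3)\ge\tfrac{3n}{2}$.

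For the upper bound, let $G$ be a properly edge-colored graph on $n$ vertices with no rainbow $P_3$. I would show that every connected component $H$ of $G$ satisfies $e(H)\le\tfrac32|V(H)|$; summing over components then gives $e(G)\le\tfrac32 n$. The heart of the matter is a dichotomy: either $\Delta(H)\le 3$, or $H$ is a star. In the first case $2e(H)=\sum_{v\in V(H)}d(v)\le 3|V(H)|$, so $e(H)\le\tfrac32|V(H)|$. In the second case $H=K_{1,m}$ for some $m$, whence $e(H)=|V(H)|-1\le\tfrac32|V(H)|$. Either way the bound holds (degenerate components — isolated vertices, single edges, triangles — are covered by the first case).

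To prove the dichotomy, suppose $H$ has a vertex $u$ with $d(u)\ge 4$; I claim every neighbor of $u$ is a leaf, so that $H$ is the star with center $u$. Suppose not: some neighbor $v$ of $u$ has a neighbor $y\ne u$. Choose three distinct vertices $x_1,x_2,x_3\in N(u)\setminus\{v\}$ (possible since $d(u)\ge 4$); at least two of them, say $x_1$ and $x_2$, are distinct from $y$. Then $x_1uvy$ and $x_2uvy$ are genuine copies of $P_3$, each on four distinct vertices. Since the middle edge $uv$ is incident with each of $x_iu$ and with $vy$, properness forces $c(x_iu)\ne c(uv)$ and $c(vy)\ne c(uv)$; as neither copy is rainbow, we must therefore have $c(x_1u)=c(vy)=c(x_2u)$, contradicting properness at $u$. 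This establishes the dichotomy.

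Combining the two bounds yields $\ex^*(n,P_3)=\tfrac{3n}{2}$ for $4\mid n$; and since $\ex(n,P_3)=n+O(1)$ by Erd\H os and Gallai, this equals $\tfrac32\ex(n,P_3)+O(1)$. I do not expect any serious obstacle here: once one spots the dichotomy the argument is short, and the only points requiring care are the treatment of small/degenerate components and the verification that the two paths $x_iuvy$ genuinely have four distinct vertices. The one genuinely creative step is the observation that a vertex of degree at least $4$ forces its entire component to be a star.
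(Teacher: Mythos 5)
Your proof is correct, and the upper bound is organized a bit differently from the paper's. The paper picks a single vertex $v$ of degree at least $3$ (which exists if $e(G)>\tfrac{3n}{2}$), proves two local facts --- neighbors of such a $v$ send no edges outside $\{v\}\cup\Gamma(v)$, and if $d(v)\ge 4$ then $G[\{v\}\cup\Gamma(v)]$ is a star --- concludes that this closed neighborhood is an entire component of average degree at most $3$, deletes it, and finishes by induction on $n$. You instead argue component by component with no induction: components of maximum degree at most $3$ are dispatched by the degree-sum bound $2e(H)\le 3|V(H)|$, and a single lemma shows that any component containing a vertex $u$ of degree at least $4$ must be a star, by running two copies of $P_3$ through the same middle edge $uv$ and forcing two edges at $u$ to share a color, contradicting properness. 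Your packaging is slightly cleaner in that it merges the paper's two local observations into one (needed only for degree $\ge 4$), avoids the separate analysis of degree-exactly-$3$ vertices, and avoids induction altogether, giving $e(G)\le\tfrac{3n}{2}$ in one pass; the paper's version isolates the extremal local structure (a properly $3$-colored $K_4$ or a star) explicitly, which makes the sharpness of the bound transparent. The lower bound construction and its verification (end-edges of a Hamiltonian path of $K_4$ form a perfect matching, hence a repeated color) are the same as in the paper, just spelled out in more detail.
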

\begin{proof}
The example already shown, namely $\frac{n}{4}$ disjoint copies of properly 3-colored $K_4$s, shows that $\ex^*(n,P_3)\ge\frac{3n}{2}$.
For the other direction, suppose that $G=(V,E)$ is a graph with more than $\frac{3n}{2}$ edges and no rainbow $P_3$, and select $v\in V$
with $d(v)\ge 3$ (there must be at least one such $v$). Then the neighbors $v_1,\ldots,v_r$ of $v$ can only be adjacent to each other,
since if $v_iw\in E$ with $vw\not\in E$ then $wv_ivv_j$ is a rainbow $P_3$ for some $j$ (chosen so that the colors of $v_iw$ and $vv_j$ are
different). Moreover, if $d(v)\ge 4$, then $G[v\cup\Gamma(v)]$ is a star, since if $v_iv_j\in E$ then $v_jv_ivv_k$ is a rainbow $P_3$,
where this time $k$ has been chosen so that $v_iv_j$ and $vv_k$ receive different colors. Consequently, if $d(v)\ge 3$, then
$G_v=G[v\cup\Gamma(v)]$ is a component of $G$ whose average degree is at most 3, so we may remove it and apply induction.
\end{proof}

\noindent For $P_4$, we have the following theorem.

\begin{theorem}
If $n$ is divisible by 8, then $\ex^*(n,P_4)= 2n$. In general, $\ex^*(n,P_4)=2n+O(1)$.
\end{theorem}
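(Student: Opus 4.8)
The lower bound is easy: the graph consisting of $\frac n8$ disjoint copies of a properly edge-colored $K_8$ has $\frac n8\binom 82=\frac{7n}{2}$ edges — wait, that's too many, so this isn't the right construction. Let me reconsider: we need an $F$-free graph with $2n$ edges. For the usual Turán number $\ex(n,P_4)=\frac{3n}{2}$, the extremal graph is disjoint $K_4$'s. For the rainbow problem we should gain a factor roughly $\frac43$ (as Theorem~\ref{t:2/3} suggests, and as happened for $P_3$). The natural candidate is a disjoint union of copies of some small properly-colored graph of average degree $4$ containing no rainbow $P_4$; the divisibility by $8$ suggests the block is on $8$ (or perhaps $16$) vertices. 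So the first step is to \emph{identify the extremal block}: a properly edge-colored graph $B$ on a constant number of vertices, with $e(B)/|V(B)|=2$, containing no rainbow $P_4$; a good guess is the complete bipartite graph $K_{4,4}$ properly $4$-edge-colored, or a blow-up of the $K_4$ construction, and one checks by a short case analysis that it has no rainbow $P_4$. Taking $\frac n8$ disjoint copies gives $2n$ edges, establishing $\ex^*(n,P_4)\ge 2n$ (and $2n+O(1)$ when $8\nmid n$ by padding with a few extra vertices).

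For the upper bound, the plan is to mimic the $P_3$ proof: suppose $G$ has more than $2n+O(1)$ edges and a proper edge-coloring with no rainbow $P_4$, and derive structural constraints on the neighborhoods of high-degree vertices, then peel off a bounded-average-degree component and induct. Concretely, I would first show that if $v$ has large degree then its neighborhood $\Gamma(v)$ is ``almost closed'': any edge leaving $\Gamma(v)$ to a vertex $w\notin \{v\}\cup\Gamma(v)$ together with two suitable edges at $v$ would create a rainbow $P_4$ (since with $d(v)$ large we have enough colors at $v$ to extend a path $w v_i v v_j$ on the other side, or rather to find $w v_i v v_j v_k$ — one must be careful about path length here, $P_4$ has $4$ edges). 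Then, using that the coloring is proper, bound the number of edges inside $\{v\}\cup\Gamma(v)$: if $G[\{v\}\cup\Gamma(v)]$ had too many edges one could route a rainbow path of length $4$ through $v$ and two levels of its neighborhood. The goal is to conclude that any sufficiently-high-degree vertex lies in a component of average degree at most $4$, which can then be removed; the low-degree part is handled directly since a graph with all degrees bounded by a constant and average degree exceeding $4$ still must be dealt with — so in fact the cleaner approach is: bound the maximum degree of a hypothetical densest counterexample, then within bounded-degree graphs argue that density above $2$ forces a rainbow $P_4$ by a local argument around any edge.

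\textbf{The main obstacle.} The hard part is the upper bound, and specifically getting the constant exactly right rather than just $2n+O(1)$-ish. Unlike $P_3$, where a high-degree vertex forces its neighborhood to be an \emph{independent set} (a clean star component), for $P_4$ the neighborhood of a high-degree vertex can support some edges — roughly, $G[\{v\}\cup\Gamma(v)]$ can be a properly $2$-edge-colored graph on $\Gamma(v)$ plus the star to $v$, and one must show this has average degree at most $4$ and understand when it fails to be ``closed off'' from the rest of $G$. Handling the medium-degree vertices (those whose degree is a constant, neither tiny nor provably forcing the local structure) is where the case analysis will be heaviest, and where the precise block in the construction must match the upper-bound argument; I expect the proof to require a careful partition of $V(G)$ by degree into three ranges and a separate argument in each, together with a short exhaustive check that the extremal block is itself rainbow-$P_4$-free. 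The $+O(1)$ in the general statement signals that the authors do not pin down the additive constant for all residues mod $8$, which lets the bounded-range case analysis be somewhat loose.
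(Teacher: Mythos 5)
Your proposal is a plan rather than a proof, and both halves have gaps. On the lower bound, you correctly guess that the block is $K_{4,4}$ with a proper $4$-edge-coloring, but you leave the coloring unspecified and defer the verification, and the choice genuinely matters: a proper $4$-edge-coloring of $K_{4,4}$ is a Latin square of order $4$, and the cyclic one (vertices $a_0,\dots,a_3$ and $b_0,\dots,b_3$, edge $a_ib_j$ colored $i+j \bmod 4$, i.e.\ the $\Z_4$ table) \emph{does} contain a rainbow $P_4$, e.g.\ $b_1a_0b_0a_3b_3$ with colors $1,0,3,2$. The construction that works is the Klein-four-group coloring (as in the paper's Figure 1, in the spirit of Maamoun--Meyniel), for which every $4$-cycle sees exactly $2$ or $4$ colors, and that property is exactly what makes the short rainbow-$P_4$-freeness check go through. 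So "one checks by a short case analysis" hides a step that fails for some natural choices of the coloring.

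The more serious gap is the upper bound, where you never carry out the argument: you yourself flag the "medium-degree" vertices and the heaviest case analysis as unresolved, and the peel-off-and-induct strategy modeled on $P_3$ is not known to work here. Nothing forces the closed neighborhood of a high-degree vertex to be a component of average degree at most $4$ that can be removed (indeed, in the extremal block $K_{4,4}$ there is no high-degree vertex at all, so the structure you aim for is not the right target). The paper's upper bound is both simpler and different: from $m>2n$ edges one passes to a subgraph $G'$ with minimum degree at least $3$ and average degree greater than $4$, hence containing a vertex $v$ of degree at least $5$, and then a direct case analysis --- Case 1: some rainbow $P_3$ ends at $v$; Case 2: none does, in which case $v$ together with a forced configuration $x,y,z$ spans a properly $3$-edge-colored $K_4$ and a fifth neighbor of $v$ yields the path --- produces a rainbow $P_4$ outright, with no induction, no degree partition, and no additive slack. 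Note also that the paper's upper bound is exactly $2n$ for every $n$; the $O(1)$ appears only in the lower bound when $8\nmid n$, so you cannot lean on looseness in the upper-bound argument as your last sentence suggests.
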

\begin{proof}
The lower bound comes from the proper edge-coloring of $K_{4,4}$ illustrated in Figure 1, which contains no rainbow $P_4$.
(To see this, note that in the given coloring, any 4-cycle containing two identically-colored edges must in fact be 2-colored,
so that every 4-cycle contains either 2 or 4 colors. Now suppose (to the contrary) that $xyzst$ is a rainbow $P_4$. Then the cycle $xyzsx$ must contain
all 4 colors, so that edges $st$ and $sx$ must receive the same color, which is impossible since they are adjacent.) Next, if
$n=8k$, then the disjoint union of $k$ such edge-colored $K_{4,4}$s has $2n$ edges and no rainbow $P_4$.
Consequently, $\ex^*(n,P_4)\ge 2n$ if $8|n$, and $\ex^*(n,P_4)\ge 2n+O(1)$ in general.

\begin{figure}[htp!]
\centering
\includegraphics[scale=1]{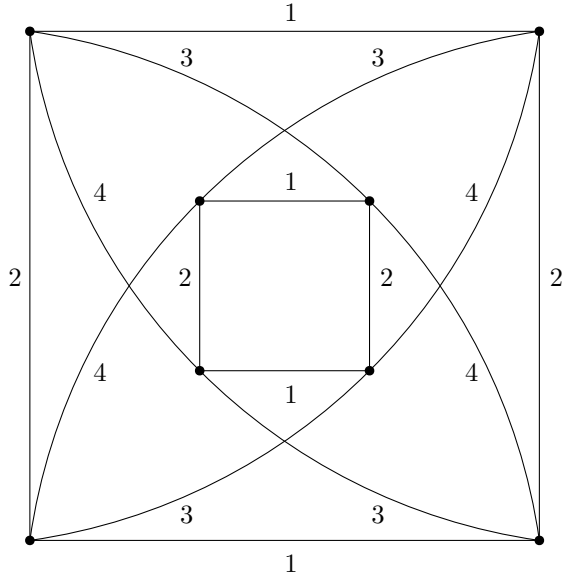}
\caption{\small A proper edge-coloring of $K_{4,4}$ with no rainbow $P_4$}
\label{f:blockingset}
\end{figure}

For the upper bound, we show that every proper edge-coloring of an $n$-vertex graph $G$ with $m > 2n$ edges contains a rainbow $P_4$.

As noted before, $G$ contains a subgraph $G'$ of minimum degree at least $3$, since otherwise we can repeatedly
remove vertices of degrees $1$ and $2$ so that the average degree increases. Furthermore, $G'$ has average degree greater than $4$. Therefore, $G'$ has a vertex $v$ of degree at least $5$. We will show that $G'$ contains a rainbow $P_4$. The proof now splits into two cases.

\noindent{\bf Case 1: $G'$ contains a rainbow $P_3$ ending at $v$.} This case is illustrated in Figure 2; let the rainbow $P_3$
be $P=vxyz$, where edges $vx,xy$ and $yz$ are colored $1$, $2$ and $3$ respectively. Since $v$ has degree at least $5$, it must be adjacent to at
least $2$ vertices not on $P$; suppose these vertices are $s$ and $t$. If either of the edges $vs$ and $vt$ receives a color other than $2$ or $3$, then we have a rainbow $P_4$. Now suppose that $c(vs)=2$ and $c(vt)=3$, where $c$ denotes the color of the edge. If $v$ is adjacent
to any other vertex $u$ not on $P$, then since $c(uv)$ would have to be different from $1$, $2$ and $3$, the edge $uv$ with $P$ forms a rainbow $P_4$. Otherwise, the vertex $v$ has degree $5$ and is adjacent to both $y$ and $z$. Without loss of generality, suppose $c(vy)=4$ and
$c(vz)=5$.

Suppose that the vertex $z$ is adjacent to $x$. Note
that $c(xz)$ cannot be $1$, $2$ or $3$, and so $svxzy$ is a rainbow $P_4$.
If $z$ is not adjacent to $x$, then $z$ is adjacent to a vertex $w$ not on $P$ (possibly $w=s$ or $w=t$) as the minimum degree of $G'$ is at least $3$.
We know that
$c(wz)$ cannot be $3$ or $5$; if $c(wz)=1$ then $wzvyx$ is a rainbow $P_4$, while if $c(wz)=2$ then $wzyvx$ is a rainbow $P_4$.
However, if $c(wz)$ is not 1, 2 or 3, then $vxyzw$ is a rainbow $P_4$. 
Accordingly, this completes the proof in Case 1.

\begin{figure}[htp!]
\centering
\includegraphics[scale=1]{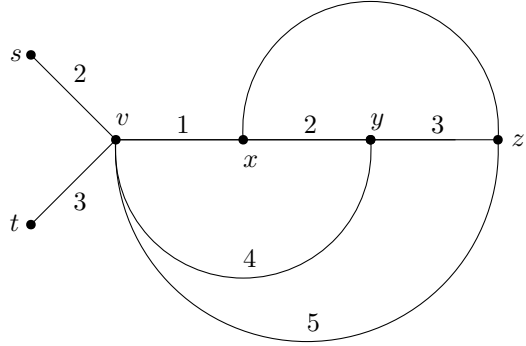}
\caption{\small A rainbow $P_3$ ending at a vertex $v$ of degree at least $5$}
\label{f:blockingset}
\end{figure}

\noindent{\bf Case 2: $G'$ contains no rainbow $P_3$ ending at $v$.} Since $\delta(G')\ge 3$, $G'$ contains a rainbow $P_2$
ending at $v$; let this path be $vxy$, where $c(vx)=1$ and $c(xy)=2$. The vertex $y$ has degree at least 3; if $y$ were adjacent to
two vertices $s$ and $t$ other than $v$ and $x$, then one of edges $ys$ and $yt$ would receive color 3, creating a rainbow
$P_3$ ending at $v$. Consequently, the degree of $y$ is $3$ and $y$ is adjacent to $v$ and a new vertex $z$. Furthermore, $c(yz)=1$, and, without
loss of generality, $c(yv)=3$. Let $P$ be the path $vxyz$.

The vertex $z$ is adjacent to at most one vertex $w$ not on $P$ and the edge $zw$ must receive color $3$ to avoid
the rainbow $P_3$ $vyzw$ ending at $v$. Consequently, $z$ is adjacent to at least one of $v$ or $x$. The proof now splits into
three sub-cases.

\noindent{\bf Case 2A: $z$ is adjacent to $x$ and a new vertex $w$.} This case is illustrated on the left of Figure 3.
Edge $xz$ cannot receive any of colors $1$, $2$ or $3$, and so $vxzw$ is a rainbow $P_3$ ending at $v$.

\noindent{\bf Case 2B: $z$ is adjacent to $v$ and a new vertex $w$.} This case is illustrated in the center of Figure 3.
Edge $vz$ must receive color 2 to avoid the rainbow $P_3$ $vzyx$ ending at $v$. Now, if $w$ were adjacent to two vertices
$s$ and $t$ other than $v,x,y$ and $z$, then one of edges $ws$ and $wt$ would receive color other than $2$ and $3$, creating a rainbow
$P_3$ ending at $v$. Therefore, there is at least one edge from $w$ to $v$, $x$, or $y$. Such an edge cannot receive colors $1$, $2$, or $3$. If $wv$ is an edge, then $vwzy$ is a rainbow $P_3$; if $wx$ is an edge,
then $vxwz$ is a rainbow $P_3$; if $wy$ is an edge, then $vxyw$ is a rainbow $P_3$. In all cases we have found a rainbow
$P_3$ ending at $v$.

\noindent{\bf Case 2C: $z$ is adjacent to both $v$ and $x$.} This case is illustrated on the right of Figure 3. In this case,
the vertices $v,x,y,z$ induce a properly $3$-edge-colored $K_4$ as otherwise we can easily find a rainbow $P_3$ ending at $v$. We will exploit the resulting symmetry in the three colors
$1$, $2$ and $3$. The vertex $v$ must be adjacent to a new vertex $u$, and, without loss of generality, $c(uv)=4$. If the vertex $u$ is adjacent to a new vertex $w$, then
we may assume that $c(uw)=1$, and then $wuvzx$ would be a rainbow $P_4$. Otherwise,
$u$ is adjacent to at least two of $x,y$ and $z$; suppose it is adjacent to $x$. Then $c(ux)$ cannot be $1$, $2$, $3$ or $4$, and then
$xuvzy$ is a rainbow $P_4$.

Thus, in all three sub-cases we obtain either a rainbow $P_3$ ending at $v$ (leading us to Case 1), or a rainbow $P_4$
in $G'$.

\begin{figure}[htp!]
\centering
\includegraphics[scale=1]{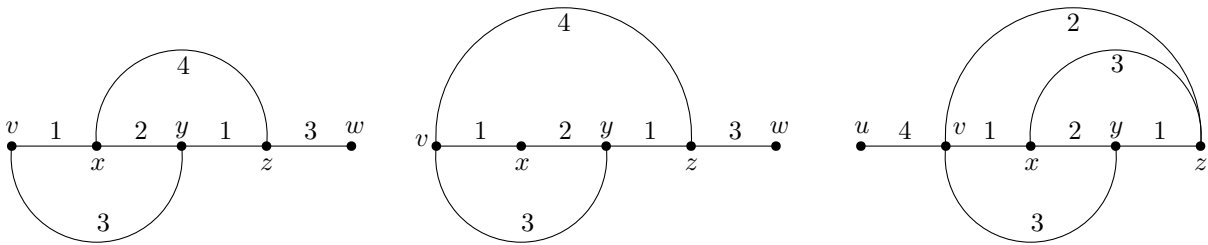}
\caption{\small No rainbow $P_3$ ends at a vertex $v$ of degree at least $5$}
\end{figure}

\end{proof}

Keevash, Mubayi, Sudakov and Verstra\"ete conjectured that the extremal example for rainbow $P_l$s is a disjoint union of
cliques of size $c(l)$, where $c(l)$ is chosen as large as possible so that $K_{c(l)}$ can be properly edge-colored with no
rainbow $P_l$. It is not hard to show that a properly edge-colored $K_5$ must contain a rainbow $P_4$, so that $c(4)=4$.
Consequently, the conjecture implies that $\ex^*(n,P_4)=\tfrac{3n}{2}+O(1)$, which is false, as our theorem shows.

\end{document}